\Crefname{Thm}{Theorem}{Theorems}
\Crefname{ThmA}{Theorem}{Theorems}
\Crefname{Exa}{Example}{Examples}
\Crefname{Def}{Definition}{Definitions}
\Crefname{square}{Square}{Squares}
\Crefname{diagram}{Diagram}{Diagrams}
\newtheorem{Thm}{Theorem}[section]
\newtheorem{Prop}[Thm]{Proposition}
\newtheorem{Lem}[Thm]{Lemma}
\newtheorem{Cor}[Thm]{Corollary}
\newtheorem{ThmA}{Theorem}
\theoremstyle{remark}
\newtheorem{Rem}[Thm]{Remark}
\theoremstyle{definition}
\newtheorem{Def}[Thm]{Definition}
\newtheorem{Exa}[Thm]{Example}
\definecolor{mat}{HTML}{FFD6AD}
\definecolor{damien}{HTML}{ffadad}
\newcommand\coT{T^*}
\newcommand\coN{N^*}
\newcommand\g{\mathfrak{g}}
\newcommand\B[1]{B#1}
\newcommand\DR[1]{\mathrm{DR}^\bullet\!\left(#1\right)}
\newcommand\DRp[2]{\mathrm{DR}^{#1}\!\left(#2\right)}
\newcommand\Ap[2]{\mathcal{A}^{#1}\!\left(#2\right)}
\newcommand\Apcl[2]{\mathcal{A}^{#1,cl}\!\left(#2\right)}
\newcommand\Sym[2]{\mathrm{Sym}^\bullet_{#1}\left(#2\right)}
\newcommand\Lag[1]{\mathrm{Lag}\left(#1\right)}
\newcommand\bfLag{\mathbf{Lag}}
\newcommand\Crit{Crit}
\newcommand\Graph{Graph}
\newcommand\cO{\mathcal O}
\newcommand\pt{pt}
\newcommand\Id{\mathrm{Id}}
\newcommand\red{_{red}}
\newcommand\Map[3]{\mathrm{Map}_{#1}\left(#2,#3\right)}
\newcommand\Tr[1]{\mathrm{Tr}\left(#1\right)}
\newcommand\bfone{\mathbf{1}}
\author{
Mathieu Anel\footnote{Department of Philosophy, Carnegie Mellon University, mathieu.anel@protonmail.com}
\and
Damien Calaque\footnote{IMAG, Univ. Montpellier, CNRS, Montpellier, France, damien.calaque@umontpellier.fr}
}
\title{Shifted symplectic reduction \\ of derived critical loci}
\date{}
\begin{document}

\maketitle

\begin{abstract}
We prove that the derived critical locus of a $G$-invariant function $S:X\to\mathbb{A}^1$ carries a shifted moment map, and 
that its derived symplectic reduction is the derived critical locus of the induced function $S\red: X/G\to\mathbb{A}^1$ on the orbit stack. 
We also provide a relative version of this result, and show that derived symplectic reduction commutes with derived lagrangian intersections. 
\end{abstract}

\setcounter{tocdepth}{2}
\tableofcontents


\section*{Introduction}
\addcontentsline{toc}{section}{Introduction}

This paper is concerned with the derived symplectic geometry (in the sense of \cite{PTVV}) of critical loci in the presence of symmetries. 
Derived symplectic geometry can be seen as a model independent, homotopy invariant, and global way of dealing with the $QP$-manifolds of \cite{AKSZ}. 

\subsection*{Motivation: the BV formalism}

The seminal work \cite{AKSZ} takes its roots in the so-called \textit{Batalin--Vilkovisky (BV) formalism} \cite{BV}. 
In gauge theories, path integrals are ill-defined because of the presence of infinite dimensional symmetries. 
A way to make sense of them is to remove the redundant variables/fields by fixing the gauge, which of course destroys 
gauge invariance. 
A first attempt to face this problem is to introduce new fields, called \textit{ghosts}, which mathematicians know very well 
as Chevalley generators (for infinitesimal symmetries of the lagrangian). 
A systematic treatment has been initiated in \cite{BV} and is referred to as the BV formalism. 
Mathematicians have shown a great interest in understanding the deep nature of the BV formalism and its precursor, the BRST formalism. 
They have in particular given attention to two easier variants: 
\begin{itemize}
\item Finite dimensional (as opposed to the infinite dimensional situation physicists are interested in\footnote{Many infinite dimensional complications in physics come from the fact that the stacks involved naturally appear as quotients of infinite dimensional groupoids, 
but they can often be presented also by means of finite dimensional groupoids. }). 
\item Classical (as opposed to quantum). 
\end{itemize}
The BRST formalism has quickly been interpreted in terms of Hamiltonian reduction by Kostant--Sternberg \cite{KS}, of which we know 
from \cite{Cal15} that it can be understood as a derived lagrangian intersection. 
This interpretation has been extended to the realm of derived Poisson geometry by Safronov \cite{Safred}: Poisson reduction 
is indeed a derived coisotropic intersection. 
A systematic mathematical treatment of the classical finite dimensional BV construction has been initiated by Felder--Kazhdan in 
\cite{FK}, using pre-derived geometric tools. 
Apart from ghosts (and, possibly, ghosts for ghosts), the BV formalism also introduces other fields called \textit{anti-fields}. 
Very roughly, they are here to cure the possible presence of degenerate or non-isolated critical points of the action functional $S$. 
Let us give a quick description of the BV procedure, whose goal is to construct a certain differential graded algebraic object: 
\begin{itemize}
\item For any field variable $x_i$, add a dual anti-field variable $\xi_i=\partial_i$ of cohomological degree $-1$. 
The differential of $\xi_i$ is the $x_i$-derivative $\partial_iS$ of the action functional. 
\item Add degree $-2$ variables for every generator of the infinitesimal symmetries of $S$. 
They are usually called \textit{anti-ghost} variables and are sent, by the differential, to the corresponding vector field 
(a linear combination of the $\xi_i$'s which coefficients are functions of the $x_i$'s) killing the action functional $S$. 
\item To every anti-ghost variable, add a dual ghost variable of degree $1$ (a Chevalley generator). 
\end{itemize}
Usually, the process goes on because there might be higher symmetries, but we will stop here to simplify the exposition. 
There is a perfect duality (between field and anti-field variables, and between ghost and anti-ghost variables) that has a cohomological shift. 
This is the manifestation of the presence of a $(-1)$-shifted symplectic structure (in the sense of \cite{PTVV}). 

\medskip

The appearance of anti-fields is well-understood. 
The space of critical points of the action functional is the intersection of the graph of $d_{dR}S$ with the zero section in a cotangent space. 
Classically, one may need to perform a small geometric perturbation in case the intersection is not transversal. 
Anti-field variables $\xi_i$'s are here to perform a \textit{homological perturbation} instead, computing the \textit{derived} intersection of the zero section 
with the graph of $d_{dR}S$. 
Both are lagrangian, so that their derived intersection, which is known as the \textit{derived critical locus} of $S$, 
is $(-1)$-shifted symplectic (see \cite{PTVV,Ve}). 

\begin{quote}
\textbf{The main goal of this paper is to suggest an accurate interpretation 
of the classical BV formalism in the setting of derived symplectic geometry. }
\end{quote}

\subsection*{Main results}

At the heart of the BV--BRST formalism is the wish to compute (possibly ill-defined) path integrals by taking gauge symmetries out (e.g.~in order to get rid of infinite dimensional issues). 
Perturbatively, such an integral localizes at the critical points of the action functional $S:X\to\mathbb{A}^1$, and the BV formalism takes care of the possible 
degenerate nature of the critical locus by considering the \textit{derived critical locus} instead.
The derived critical locus is defined as the lagrangian intersection of the zero section with the graph of $d_{dR}S$ in $\coT X$, it is therefore $(-1)$-shifted symplectic \cite{PTVV,Ve}.


\bigskip

The symmetries of the action functional $S$ are in general a groupoid and not a group, but we will restrict ourselves to the latter situation for the clarity of exposition.
Notice, though, that what many people call \textit{the} BV formalism is the case when one considers the maximal homotopy (formal) groupoid of (higher) symmetries of $S$. 
This will be the matter of further investigations in subsequent work.

We shall here focus on the following situation: $X$ will be an algebraic variety (or stack), $G$ an algebraic group acting on $X$, and $S:X\to \mathbb A^1$ will be an invariant function, for which we denote by $S\red:X/G \to \mathbb A^1$ the induced function on the quotient stack $ X/G$.
From there, the BV procedure is usually done in different order by mathematicians or physicists:

\begin{center}
\begin{tabular}{l|l}
\textbf{In math references} (e.g.~\cite{FK,Pau}) & \textbf{In the original physics litterature} 
\\ 
\hline 
\\[-0.3cm]
field variables & field variables \\
anti-field variables (Koszul) & ghost variables (symmetries) \\
anti-ghost variables (Tate) & anti-field variables \\
ghost variables (Chevalley) & anti-ghost variables 
\end{tabular}
\end{center}
In the left column, anti-field and anti-ghost variables appear while taking a Koszul--Tate resolution 
(see \cite[Proposition 3.1.1]{Pau} and \cite[\S4.1]{FK}): more precisely, according to \cite{Sta} anti-field variables are 
identified as Koszul generators and anti-ghost variables (i.e.~infinitesimal symmetries) as Tate generators. 
It is only at the last step that one introduces ghost variables (or Chevalley generators), which means that we are considering a 
stacky quotient by the gauge symmetries. 

The (shifted) symplectic geometric interpretation of this procedure is easier for the right column:
the appearance of ghost variables first means that we are first taking the stacky quotient by infinitesimal symmetries, and then computing the derived critical locus of the function $S\red$ on the quotient by adding anti-fields and anti-ghosts variables.
The interpretation of the left column is less obvious but revealed by the following fact, which is one of our main results (see also \cite{BSS}):

\begin{ThmA}[see \Cref{thm-main}]\label{thm:A}
The derived stack $\Crit(S)$ comes equipped with a $(-1)$-shifted moment map $\Crit(S)\to \mathfrak g^*[-1]$ such 
that its derived symplectic reduction $\Crit(S)\red$ is equivalent, as a $(-1)$-shifted symplectic stack, to the derived 
critical locus $\Crit(S\red)$ of $S\red$. 
\end{ThmA}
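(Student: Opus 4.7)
The plan is to realize $\Crit(S)$ as a derived lagrangian intersection in $\coT X$, promote that intersection to a $G$-equivariant one with controlled moment-map behavior, and then identify the reduction by combining the cotangent-of-quotient principle with the commutation of derived symplectic reduction and derived lagrangian intersection advertised in the abstract.

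First, I would recall the presentation $\Crit(S)\simeq X\times^h_{\coT X} X$, where one copy of $X$ maps via the zero section and the other via $\Graph(d_{dR}S)$, and that both are lagrangian in the $0$-shifted symplectic stack $\coT X$. The cotangent lift of the $G$-action on $X$ makes $\coT X$ Hamiltonian with classical moment map $\mu\colon \coT X\to \g^*$. The key observation is that $\mu$ is canonically trivialized on each of the two lagrangians: on the zero section this is tautological, and on $\Graph(d_{dR}S)$ the pulled-back moment map $\mu\circ d_{dR}S\colon X\to \g^*$ records the derivatives of $S$ along vector fields in $\g$, which vanishes identically by $G$-invariance of $S$. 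In shifted symplectic language, each lagrangian thus promotes to a lagrangian morphism into $[\coT X/G]$ sitting over $0\in [\g^*/G]$.

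Next I would invoke the general principle that the derived intersection of two such equivariant lagrangians in a Hamiltonian $G$-space carries a canonical $(-1)$-shifted moment map to $\g^*[-1]$; concretely, the two trivializations of $\mu$ furnish two nullhomotopies of $\mu$ restricted to the intersection, and their difference represents the desired map. Applied to our situation this yields the promised $(-1)$-shifted moment map $\Crit(S)\to \g^*[-1]$. For the reduction statement I would use two ingredients: the canonical identification $\coT(X/G)\simeq (\coT X)//G$ of $0$-shifted symplectic stacks, under which the zero section of $\coT(X/G)$ corresponds to the reduction of the zero section of $\coT X$ and $\Graph(d_{dR}S\red)$ corresponds to the reduction of $\Graph(d_{dR}S)$ (here $G$-invariance of $S$ enters for a second time, letting $S$ descend to $S\red$ and $d_{dR}S$ descend through the cotangent reduction); and the commutation of derived symplectic reduction with derived lagrangian intersection. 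Together they give
\[ \Crit(S)\red \;\simeq\; \bigl(X\times^h_{\coT X} X\bigr)//G \;\simeq\; (X/G)\times^h_{\coT(X/G)} (X/G) \;\simeq\; \Crit(S\red) \]
as $(-1)$-shifted symplectic stacks.

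The hard part will be verifying that the lagrangian structure on $\Graph(d_{dR}S)$ descends to the expected lagrangian structure on $\Graph(d_{dR}S\red)$ not merely at the level of underlying stacks but at the level of full Hamiltonian lagrangian data: one must match the shifted $1$-forms \emph{together with} their primitives, the primitive in this case being essentially $S$ itself. Tracking this data coherently through the cotangent reduction and the bar-type resolutions used to model $[X/G]$, while simultaneously keeping the two trivializations of $\mu$ that produce the shifted moment map, is where the main technical effort of the proof is likely to sit.
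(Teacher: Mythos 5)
Your argument is correct, and it is essentially the paper's \emph{second} proof of \Cref{thm:A}, the one given in \Cref{section3bis}: realize $\Crit(S)$ as the intersection of the zero section with $\Graph(d_{dR}S)$ in $\coT X$, note that both lagrangians admit derived symplectic reductions over $\coT(X/G)\simeq(\coT X)\red$, extract the $(-1)$-shifted moment map from the two nullhomotopies of $\mu$ furnished by the two lagrangians, and conclude by the commutation of reduction with lagrangian intersection. The paper's primary proof (\Cref{thm-main}) takes a genuinely different route: it introduces the relative critical locus $\Crit_p(S\red)=p_\dagger\Graph(d_{dR}S\red)$ for $p:X/G\to\B G$, a lagrangian morphism to $\coT\B G\simeq\g^*[-1]/G$ which is literally the quotient of the sought moment map, and then identifies its fiber products against $\g^*[-1]\to\g^*[-1]/G$ and against the zero section $\B G\to\g^*[-1]/G$ with $\Crit(S)$ and $\Crit(S\red)$ respectively, using the Beck--Chevalley formula $f^\dagger p_\dagger\simeq\overline{p}_\dagger\overline{f}^\dagger$ (\Cref{prop-BC}) and functoriality of $(-)_\dagger$ (\Cref{lem:compo-zeroes}). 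That route exhibits the moment map structure of \Cref{def:momentmap} directly and extends to the relative statement (\Cref{thm:B}); yours buys the conceptual explanation. Two caveats. First, the ``general principle'' you invoke is exactly \Cref{thm:C}; its proof (a double-limit argument over a $3\times3$ diagram) is independent of \Cref{thm:A}, so there is no circularity, but it is where most of the technical content actually sits, so you are outsourcing the hard part rather than avoiding it. Second, the descent issue you flag at the end is cheaper than you fear: the paper takes $S\red$ (equivalently the closed $1$-form $\alpha=d_{dR}S\red$ on $X/G$) as the primary datum and defines $d_{dR}S$ as its pullback, so that $\Graph(d_{dR}S)\simeq g^\dagger\Graph(d_{dR}S\red)$ by \Cref{lem-graph-pullbacks} and the reduction of the full lagrangian data is automatic (\Cref{example-lagrangianreduction}), with no bar-resolution bookkeeping required.
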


This gives the following interpretation of the left column in our table: anti-ghost variables appear when one takes the 
(derived) zeroes of the moment map (first step of symplectic reduction) and ghost variables appear as usual when one takes 
the quotient by $G$ (second step of symplectic reduction).  
In other words, the difference between the two columns is the permutation of the two steps of reduction and extremalization!
Strictly speaking, it is when one takes the quotient by the Lie algebra symmetries that we get ghost variables. The generalization of the classical BV formalism 
in finite dimensions from Lie algebra to group actions has been achieved in the recent work \cite{BSS}: the authors identify 
$\Crit(S\red)$ with a shifted symplectic reduction of $\Crit(S)$, they compute explicitely its function algebra, and they compare this algebra with the algebra 
produced by the classical BV formalism (also identifying $(-1)$-shifted symplectic structures on both sides). 

\bigskip

Our second main result is a useful variation of \Cref{thm:A} where we consider {\it constrained} derived critical loci (see \cite{RS}).
If $Y$ is another stack with a $G$-action and if $p:X\to Y$ is a $G$-equivariant map, we write 
$p\red: X/G\to Y/G$ for the induced map on quotient stacks. 
The constrained derived critical locus $\Crit_p(S)$ is defined as the derived intersection 
of the graph of $d_{dR}S$ with the conormal stack to the graph of $p$. 
This intersection comes with a natural map $\Crit_p(S)\to \coT Y$ and we can prove that it carries a canonical lagrangian structure. 
\begin{ThmA}[see \Cref{thm-main2}]\label{thm:B}
The lagrangian morphism 
\[\Crit_{p\red}(S\red)\longrightarrow \coT\big(Y/G\big)
\]
is a derived symplectic reduction of the lagrangian morphism 
\[
\Crit_p(S)\longrightarrow \coT Y
\]
along the natural moment map $\coT Y\to\mathfrak g^*$. 
\end{ThmA}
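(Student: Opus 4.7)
The plan is to follow the strategy of \Cref{thm:A} in the relative setting by realizing the map $\Crit_p(S)\to\coT Y$ as a composition of two lagrangian morphisms, each carrying a canonical $G$-equivariant refinement compatible with moment maps, and then invoking the commutation of derived symplectic reduction with lagrangian composition. Concretely, I would first identify $\Crit_p(S)\to\coT Y$ with the composition of the lagrangian $\Graph(d_{dR}S):\pt\to\coT X$ and the lagrangian correspondence $\coN_{\Graph(p)}:\coT X\leftrightarrow\coT Y$. Using the natural description $\coN_{\Graph(p)}\simeq X\times_Y\coT Y\subset\coT X\times\coT Y$, with projection to $\coT X$ given by $(x,\beta)\mapsto(x,-p^*\beta)$ and to $\coT Y$ by $(x,\beta)\mapsto(p(x),\beta)$, the derived fiber product $\Graph(d_{dR}S)\times_{\coT X}\coN_{\Graph(p)}$ cuts out precisely the equation $d_{dR}S(x)+p^*\beta=0$, which is the defining equation of $\Crit_p(S)$.

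Next I would verify that each piece is $G$-equivariant and compatible with the canonical moment maps $\mu_X:\coT X\to\g^*$ and $\mu_Y:\coT Y\to\g^*$. For $\Graph(d_{dR}S)$ this is exactly the input used in \Cref{thm:A} and comes from $G$-invariance of $S$. For $\coN_{\Graph(p)}$, the $G$-equivariance of $p$ yields $p_*v_X=v_Y\circ p$, from which one deduces the pointwise identity $\mu_X(x,-p^*\beta)+\mu_Y(p(x),\beta)=0$; this exhibits $\coN_{\Graph(p)}$ as a hamiltonian lagrangian correspondence in the appropriate shifted symplectic sense. I would then invoke the commutation of derived symplectic reduction with lagrangian composition (the technical engine of \Cref{thm:A} and the content announced in the abstract). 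By \Cref{thm:A} the reduction of $\Graph(d_{dR}S)$ is $\Graph(d_{dR}S\red):\pt\to\coT(X/G)$, and the reduction of $\coN_{\Graph(p)}$ should be the conormal correspondence $\coN_{\Graph(p\red)}:\coT(X/G)\leftrightarrow\coT(Y/G)$ associated with $p\red$; composing them yields $\Crit_{p\red}(S\red)\to\coT(Y/G)$, as required.

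The main obstacle is the identification of the reduction of the cotangent correspondence $\coN_{\Graph(p)}$ with $\coN_{\Graph(p\red)}$. This is essentially a base-change statement for conormal stacks along the equivariant quotient maps $X\to X/G$ and $Y\to Y/G$, formulated at the level of hamiltonian lagrangian correspondences rather than bare lagrangians. While the statement is natural and parallels the analogous step in the proof of \Cref{thm:A}, it must be carefully set up in the derived symplectic setting to be invokable as a reduction of lagrangian correspondences; once in place, the remainder of the proof is a formal consequence of the functoriality of symplectic reduction under composition.
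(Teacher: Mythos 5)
Your decomposition of $\Crit_p(S)\to\coT Y$ as the composite of $\Graph(d_{dR}S):\pt\to\coT X$ with the conormal correspondence $\coN_{\Graph(p)}\simeq p^*\coT Y$ is exactly how the paper defines $\Crit_p(S)=p_\dagger\Graph(d_{dR}S)$, and your pointwise moment-map identity on $p^*\coT Y$ is correct. The problem is that the step you yourself flag as ``the main obstacle'' is not a deferrable technicality: it is the entire content of the theorem, and the machinery you invoke for it is not available in the form you need. \Cref{thm:C} (``reduction commutes with lagrangian intersections'') concerns two lagrangian \emph{morphisms} $L,L'\to X$ into a single hamiltonian $G$-stack; it is not a statement about composing lagrangian \emph{correspondences}, and no notion of ``derived symplectic reduction of a lagrangian correspondence'' is defined in the paper. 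The obvious attempt to fit $\coN_{\Graph(p)}$ into \Cref{def:lag-reduction} by viewing it as a lagrangian morphism into $\overline{\coT X}\times\coT Y$ fails: by \Cref{rem:lag-reduction} a reducible lagrangian must factor through the zero level of the relevant moment map, whereas your own identity $\mu_X\circ pr_X=\mu_Y\circ pr_Y$ shows that the $G\times G$-moment map on $\overline{\coT X}\times\coT Y$ restricts on $p^*\coT Y$ to a (anti)diagonal copy of $\g^*$ rather than to $0$; only the diagonal $G$-moment map vanishes, and the diagonal reduction of $\overline{\coT X}\times\coT Y$ is not $\overline{\coT(X/G)}\times\coT(Y/G)$. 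So ``reduce each factor and compose'' cannot be run as stated.

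The paper's proof of \Cref{thm-main2} supplies exactly the missing ingredient while avoiding reduction of correspondences altogether. It applies the Beck--Chevalley equivalence $f^\dagger (p\red)_\dagger\simeq p_\dagger\,\overline{f}^\dagger$ (\Cref{prop-BC}, \Cref{usefulcor}) to the cartesian square formed by $p:X\to Y$, $p\red:X/G\to Y/G$ and the quotient maps $\overline{f}:X\to X/G$, $f:Y\to Y/G$, which gives $f^\dagger\Crit_{p\red}(S\red)\simeq p_\dagger\Graph(d_{dR}S)=\Crit_p(S)$ using \Cref{lem-graph-pullbacks}; it then identifies $f^\dagger(-)=(-)\underset{\coT(Y/G)}{\times}f^*\coT(Y/G)$ with the fiber product against the reduction correspondence $Z(\mu)\simeq f^*\coT(Y/G)$ of \Cref{example-reduction} and \Cref{remark-reduction}, which is verbatim the condition required by \Cref{def:lag-reduction}. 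Your intertwining claim ``the reduction of $\coN_{\Graph(p)}$ is $\coN_{\Graph(p\red)}$'', once correctly formulated as an equivalence of composites $Z(\mu_Y)\circ\coN_{\Graph(p)}\simeq\coN_{\Graph(p\red)}\circ Z(\mu_X)$, is equivalent to this Beck--Chevalley property; to close the gap you would have to prove that equivalence, at which point you have reproduced the paper's argument.
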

This result provides a nice derived geometric interpretation (still in the case of group of symmetries) of the so-called (classical) 
BV-BFV (for Batalin--Fradkin--Vilkovisky) formalism \cite{BFV-BV,BFV-BF,CMR1,CMR2,CM}.

\bigskip

The last of our main results is a conceptual explanation for \Cref{thm:A} which can be summarized by saying that 
{\it lagrangian intersection commutes with derived symplectic reduction}. 
Let $X$ be a symplectic stack equipped with a hamiltonian $G$-action, let $\mu_{(0)}:X\to\g^*$, be the moment map and $X\red$ be the derived symplectic reduction.
Let also $L$ and $L'$ be two lagrangians of $X$, and let $L\red$ and $L'\red$ be their derived symplectic reductions. Then, the following holds:

\begin{ThmA}[see \Cref{thm-main3}]\label{thm:C}
The moment map of $X$ induces a canonical $(-1)$-shifted moment map 
\[
\mu_{(-1)}:L\underset{X}{\times}L'\to\g^*[-1]
\]
and a natural equivalence 
\[
\big(L\underset{X}{\times}L'\big)\red\simeq L\red\underset{X\red}{\times} L'\red
\]
of $(-1)$-shifted derived stacks. 
\end{ThmA}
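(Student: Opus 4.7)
The plan is to realize both derived symplectic reduction and Lagrangian intersection as instances of Lagrangian intersection in the $1$-shifted symplectic stack $[\g^*/G]$, and then to invoke the coherence (associativity and symmetry) of iterated Lagrangian intersection. Following Safronov \cite{Safred}, the Hamiltonian $G$-action on $X$ is encoded by a Lagrangian structure on $[X/G]\to [\g^*/G]$, together with the identification
\[
X\red\simeq [X/G]\underset{[\g^*/G]}{\times} BG,
\]
where $BG\to [\g^*/G]$ denotes the canonical zero Lagrangian. The $G$-equivariance of $L\to X$ and $L'\to X$ promotes these to relative Lagrangians $[L/G]\to [X/G]$ and $[L'/G]\to [X/G]$ over $[\g^*/G]$; composing with $BG\to [\g^*/G]$ then realises $L\red$ and $L'\red$ as Lagrangians inside $X\red$.

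Next, I would construct $\mu_{(-1)}$ as follows. The composite $L\underset{X}{\times}L'\to X\xrightarrow{\mu_{(0)}}\g^*$ carries two a priori distinct null-homotopies, one coming from each $G$-equivariant Lagrangian structure on $L$ and $L'$, and their difference is a canonical loop $L\underset{X}{\times}L'\to\Omega\g^*\simeq\g^*[-1]$ that I would take to be $\mu_{(-1)}$. With this in hand, I would interchange the two iterated pullbacks by a Fubini-type manipulation of homotopy limits:
\[
L\red \underset{X\red}{\times} L'\red
\simeq \Bigl([L/G]\underset{[X/G]}{\times}[L'/G]\Bigr)\underset{[\g^*/G]}{\times} BG
\simeq \bigl(L\underset{X}{\times}L'\bigr)\red.
\]

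What remains, and is the main obstacle, is to upgrade this equivalence of underlying derived stacks to an equivalence of $(-1)$-shifted symplectic stacks. On the left, the $(-1)$-shifted form arises from Lagrangian intersection of $L\red, L'\red$ inside the $0$-shifted symplectic $X\red$; on the right, from $(-1)$-shifted symplectic reduction of $L\underset{X}{\times}L'$ along $\mu_{(-1)}$. Both constructions are instances of Lagrangian composition in the symmetric monoidal $(\infty,1)$-category of Lagrangian correspondences in $[\g^*/G]$ (in the sense of Calaque--Haugseng--Scheimbauer), so the desired compatibility is a formal consequence of the associativity and symmetry of composition there. Alternatively, one can compare the two $(-1)$-shifted forms directly using the PTVV calculus of shifted de Rham complexes, producing both from a common cocycle on the iterated pullback and checking agreement term-by-term.
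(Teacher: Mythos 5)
Your construction of $\mu_{(-1)}$ as the difference of the two null-homotopies of $L\underset{X}{\times}L'\to X\xrightarrow{\mu}\g^*[n]$ is exactly the paper's (the null-homotopies come from the factorizations of $L\to X$ and $L'\to X$ through $Z(\mu)$, which are part of the data of the chosen reductions $L\red$ and $L'\red$), and your Fubini interchange of iterated pullbacks is the paper's identification of the underlying derived stacks. So the skeleton is right. The gap is precisely the step you flag as ``the main obstacle'' and then dispose of by assertion. Invoking the associativity and symmetry of Lagrangian composition does not work as stated: the right-hand side $\big(L\underset{X}{\times}L'\big)\red$ is a reduction along the $(n-1)$-shifted moment map you have just built, and to even make sense of it you must first produce a lagrangian structure on $\big(L\underset{X}{\times}L'\big)/G\to\g^*[n-1]/G$ together with the recovery equivalence required by \Cref{def:momentmap} --- neither of which your proposal constructs. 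That datum is not supplied by the coherence of $\bfLag_n$; it is part of what is to be proven, so the appeal to formality is circular at exactly the point where the content lies.

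The paper's device for closing this gap is worth internalizing: it applies the $3\times3$ Fubini interchange not to a diagram of stacks but to a \emph{morphism} $u$ of $\boxplus$-shaped diagrams in $\mathbf{dSt}$, whose source is the grid built from $L/G$, $X/G$, $L'/G$, $\B G$, $\g^*[n]/G$ and $\g^*[n]$, and whose target is the grid with $\Apcl 2 {n+1}$ in the middle and points elsewhere; the transformation $u$ packages the symplectic structure of $\g^*[n]/G\simeq\coT[n+1]\B G$ together with the four lagrangian structures mapping into it. Taking limits vertically first recovers the presentation of the $(n-1)$-shifted form on $L\underset{X}{\times}L'$ as a lagrangian intersection; taking them horizontally first produces the lagrangian structure on $\nu/G$ and the reduction presentation. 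The agreement of the two orders of limits then \emph{is} the desired equivalence of shifted symplectic structures (a second such morphism $v$ handles the identification $\big(L\underset{X}{\times}L'\big)\red\simeq L\red\underset{X\red}{\times}L'\red$), with no term-by-term cocycle comparison needed. This is the concrete form that your proposed ``common cocycle on the iterated pullback'' must take.
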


\subsection*{Description of the paper}

\Cref{section1} recalls basic facts of derived symplectic geometry (forms and closed forms on derived stacks, 
shifted symplectic structures, lagrangian structures and lagrangian correspondences, zeroes of closed $1$-forms, etc...). 
We emphasize the role of push and pull operations for lagrangians in shifted cotangent stacks, denoted $f_\dagger$ and $f^\dagger$, 
that are associated with a morphism $f:X\to Y$ of derived stacks. 

\medskip

\Cref{section2} proves a general statement (\Cref{thm:magic-cube}) about certain iterated intersections 
of lagrangian correspondences. 
It is inspired by an earlier result of Ben-Bassat \cite{BB}, and provides a convenient general framework that synthetizes the 
lagrangian intersection interpretation of the hamiltonian and quasi-hamiltonian formalisms \cite{Cal15,SafCS}. 

\medskip

\Cref{section3} states and proves \Cref{thm:A,thm:B} in the more general setting of (relative) zeroes of $1$-forms rather than just for (relative) derived critical loci. 
These results are direct consequences of the results 
in the previous section together with a Beck--Chevalley type property for the push and pull operations along a cartesian square of 
derived stacks (\Cref{prop-BC}). 

\medskip

\Cref{section3bis} proves \Cref{thm:C}, and explains how \Cref{thm:A} can be deduced from it. 

\medskip

\Cref{section4} is devoted to examples: twisted cotangent bundles of global quotient stacks, 
Hilbert schemes of points in $\mathbb{C}^3$, Chern--Simons theory (moduli of flat connections on $3$-folds), 
and Souriau's approach to Einstein's covariance principle.  

\subsection*{Acknowledgements}
The first author gratefully acknowledges the support of the Air Force Office of Scientific Research through grant FA9550-20-1-0305. 
The second author has received funding from the European Research Council (ERC) under the European Union's Horizon 2020 
research and innovation programme (grant agreement No 768679).


\section{Derived symplectic geometry}\label{section1}

\subsection{The de~Rham complex, forms, and closed forms}

This paper is written using the language of $(\infty,1)$-category theory \cite{HTT} and derived algebraic geometry \cite{Lurie:SAG, TV}.
This means that, unless otherwise specified, all functors are implicitly assumed to be derived (limits and colimits are always homotopical, global sections are always derived...) and all objects are implicitely assumed to be derived stacks.
In particular, for a group $G$ acting on a stack $X$, $X/G$ refers always to the quotient stack.

We shall use the cohomological convention for the degree of (cochain) complexes. 
In particular, if $M$ is a complex $M[-1]$ denote the looping of $M$ and $M[1]$ its suspension.
For a graded complex, we shall use the term {\it degree} to refer to the cohomological degree, and the term \textit{weight} for the auxiliary grading. 
We shall identify the latter with a $~^\bullet$ symbol. 
A \textit{graded mixed complex} is the data of a graded complex together with a \textit{mixed differential}, which is a degree $1$ and weight $1$ 
endomorphism that squares to zero. 
For a graded mixed complex $C$, we denote by $C^\sharp$ its underlying graded complex. 
We refer to \cite{CPTVV} for details about the homotopy theory of graded mixed complexes. 

Recall from \cite{PTVV} that any derived stack $X$ has a graded mixed de~Rham complex $\DR X$ (beware that we shall rather follow the weight convention of \cite{CPTVV}). 
Whenever $X=Spec(A)$ is a derived affine scheme, with $A$ a cofibrant connective commutative differential graded algebra (cdga, for short), then 
\[
\DR X ^\sharp:=\Sym A {\Omega^1_A[-1]}
\]
and the mixed differential is given by the universal derivation $A\to \Omega^1_A$, that we extend \textit{via} the Leibniz rule.
More generally, if $X$ is an arbitrary stack, we define
\[
\DR X :=\underset{Spec(A)\to X}{\mathrm{lim}} DR\big(Spec(A)\big)\,.
\]
If $X$ is Artin (which, in this paper, means geometric and locally of finite presentation) 
and if $\mathbb L_X$ is its cotangent complex, we have
\[
\DR X ^\sharp \simeq \Gamma\big(\Sym {\cO_X} {\mathbb{L}_X[-1]} \big)\,.
\]

\begin{Def}
\begin{enumerate}
\item[(a)] A {\it $p$-form of degree $n$} on $X$ is an $n$-cocycle in $\DRp p X [-p]$. 
The space $\Ap p {X,n}$ of $p$-form of degree $n$ on $X$ is defined as (the realization of the connective complex) $\tau_{\leq 0}(\DRp p X [n-p])$.

\item[(b)] A {\it closed $p$-form of degree $n$} on $X$ is an $n$-cocycle in $\prod_{i\geq p}\DRp i X [-p]$ for the total differential (i.e.~the sum of 
the differential and the mixed differential).
The space $\Apcl p {X,n}$ of closed $p$-forms of degree $n$ on $X$ is defined as (the realization of) $\tau_{\leq 0}\left(\prod_{i\geq p}\DRp i X [n-p]\right)$.     
\end{enumerate}
\end{Def}

The functor $X\mapsto \DR X$ is a stack for the \'etale topology on connective cdga (see \cite{PTVV}).
It follows that the functors $X\mapsto \Ap p {X,n}$ and $X\mapsto \Apcl p {X,n}$ are also \'etale stacks, which we denote by $\Ap p n$ and $\Apcl p n$.

\begin{Exa}[$0$-forms]
The stack $\Ap 0 0$ of $0$-forms of degree $0$ is simply $\mathbb{A}^1$. 
More generally, we have $\Ap 0 n \simeq \mathbb{A}^1[n]$, that is $\Ap 0 {X,n} = \tau_{\leq 0}\left(\Gamma(O_X)[n]\right)$. 
\end{Exa}

\begin{Exa}[Smooth schemes]\label{example:smoothscheme}
Let $X$ be a smooth scheme of finite presentation. 
Then 
\[
\DR X ^\sharp\simeq \Gamma\big(\Sym {\cO_X} {\Omega^1_X[-1]}\big)\,.
\]
The mixed differential preserves the graded subcomplex
\[
\Omega^\bullet(X):=\Gamma_0\big(\Sym {\cO_X} {\Omega^1_X[-1]}\big)
\]
of \textit{underived} global sections of $\Sym {\cO_X} {\Omega^1_X[-1]}$ and acts as the ordinary de~Rham differential $d_{dR}$ on it. 
\end{Exa}

\begin{Exa}[Classifying stacks]\label{exa-classifying}
Let $G$ be an affine algebraic group, $\g$ its Lie algebra and $\B G$ its classifying stack. 
Then,
\[
\DR {\B G}^\sharp\simeq C\big(G,\Sym {} {\g^*[-2]}\big)\,,
\]
and the mixed differential vanishes on $\Sym {}{\g^*[-2]} ^G$ (for degree reasons). 
Observe that we have implicitly used the identification $\mathbf{QCoh}(\B G)\simeq G\mathbf{-mod}$ between the stable $(\infty,1)$-category 
of quasi-coherent sheaves on $BG$ with the $(\infty,1)$-category of complexes of $G$-modules. Through this identification, the (derived) global section 
functor corresponds to the group cochains functor $C(G,-)$. 

In particular, the space of closed $1$-forms of degree $1$ on $\B G$ is equivalent to the set $(\mathfrak g^*)^G$ of infinitesimal characters. 
If $G=\mathbb{G}_m$ there is (up to scaling) a canonical infinitesimal character.
This character $\B{\mathbb{G}_m}\to \Apcl 1 1$ actually comes from the group morphism 
\[
d_{dR}log\,:\,\mathbb{G}_m\longrightarrow \Apcl 1 0 = \Omega \Apcl 1 1
\]
classifying the (degree $0$) multiplicative closed $1$-form $d_{dR}log(z)=(d_{dR}z)/z$.
Recall that an infinitesimal character $\phi:\B G\to \Apcl 1 1$ is integrable if its looping $\Omega\phi: G\to \Apcl 1 0$ factors through $d_{dR}log$.
\end{Exa}

\begin{Exa}[Global quotients]\label{example:globalquotient}
Let $X$ be a smooth scheme of finite presentation with an action of an affine algebraic group $G$. 
If $\g$ is the Lie algebra of $G$, we have
\[
\DR {X/G} ^\sharp \simeq 
C\big(G,\Gamma\big(\Sym {\cO_X} { \Omega^1_X[-1]\to\cO_X\otimes\g^*[-2]} \big)\,,
\]
where the complex $\Omega^1_X[-1]\to\cO_X\otimes\g^*[-2]$ is in degree 1 and 2 and its differential is the transpose of the infinitesimal action 
$\g \to \Gamma(T_X)$, denoted $x\mapsto\vec{x}$.
The mixed differential preserves the graded subcomplex 
\[
\big(\Omega^\bullet(X)\otimes \Sym{}{\g^*[-2]}\big)^G
\]
of \textit{underived} global section, on which it acts as $d_{dR}\otimes{id}$. Recall that elements in the above complex can be viewed as $G$-equivariant $\Omega(X)$-valued polynomial functions on $\g$, 
and that the differential is given by $(d\alpha)(x)=\iota_{\vec{x}}(\alpha(x))$, where $x\in\g$ 
and $\iota$ denotes the contraction of a form by a vector field.
\end{Exa}

\subsection{Shifted symplectic and lagrangian structures}

Following \cite{Hau}, we consider the $(\infty,1)$-category $\mathbf{Span}(\mathbf{dSt}_{/\Apcl 2 n})$ of correspondences of derived stacks over $\Apcl 2 n$. 
Its objects are called \textit{$n$-shifted presymplectic stacks}, and its $1$-morphisms are called \textit{$n$-shifted isotropic correspondences}. 
We may sometimes omit to specify ``$n$-shifted'' when it is obvious from the context. This $(\infty,1)$-category carries a symmetric monoidal structure, 
induced by the cartesian product $\times$ of derived stacks and the addition map $+:\Apcl 2 n \times \Apcl 2 n \to \Apcl 2 n$. 
Every object is dualizable for this symmetric monoidal structure: the dual $\overline{X}$ of an object $X$ is given by composing with the opposite morphism 
$\Apcl 2 n\to \Apcl 2 n$; in other word, $\overline{X}$ is the same derived stack with the opposite closed $2$-form. 

An \textit{isotropic morphism} $X\to Y$ is an isotropic correspondence $\pt_{(n)}\leftarrow X\rightarrow Y$, where $\pt_{(n)}$ denotes the terminal 
stack equipped with its only $n$-shifted closed $2$-form (which is zero). 

\medskip

The notions of $n$-shifted presymplectic stacks and isotropic correspondences have a condition of non-degeneracy that we do not recall (see \cite{PTVV,Cal15}).
Non-degenerate objects are called \textit{$n$-shifted symplectic stacks} and non-degenerate 1-morphisms are called \textit{$n$-shifted lagrangian correspondences}.
Recall from \cite{Cal15} that the non-degeneracy condition is preserved by the composition and the monoidal product.
Hence $n$-shifted symplectic stacks and $n$-shifted lagrangian correspondences define a symmetric monoidal $(\infty,1)$-subcategory $\bfLag_n$ of 
$\mathbf{dSt}_{/\Apcl 2 n}$. 
We refer to \cite{Hau} for details. 

We define an ($n$-shifted) \textit{lagrangian morphism} $L\to X$ to be a lagrangian correspondence $\pt_{(n)}\leftarrow L\rightarrow X$ and we denote by 
$\mathrm{Lag_n}(X):=\Map {\bfLag_n} {\pt_{(n)}} X$ the space of lagrangian morphisms with codomain $X$.
When $n$ is understood from the context, we shall simply write $\Lag X$ or $\Map {\bfLag} Y X$ for the mapping spaces in $\bfLag_n$.

\medskip

The construction of correspondences can be iterated into correspondences of correspondences. 
This provides the symmetric monoidal $(\infty,2)$-category
$\mathbf{Span}^2(\mathbf{dSt}_{/\Apcl 2 n})$ of $2$-iterated correspondences of derived stacks over $\Apcl 2 n$ (see \cite{Hau}), as well as its symmetric 
monoidal $(\infty,2)$-subcategory $\mathbf{Lag}_n^{(2)}$ of $2$-iterated lagrangian ones (see \cite{CHS}, as well as the heuristic 
construction in \cite{Cal15}). 
We will actually only need the truncated version from \cite{ABB}, where the authors construct a symmetric monoidal bicategory of 
$2$-iterated lagrangian correspondences (which is sufficient for our purposes, and coincides with the homotopy bicategory of $\mathbf{Lag}_n^{(2)}$).  
We briefly recall how to describe the $2$-morphisms.
Given two objects (i.e.~$n$-shifted symplectic derived stacks) $X,Y$, and two $1$-morphisms $L,M$ from $X$ to $Y$, we can 
view these $1$-morphisms (i.e.~lagrangian correspondences) as lagrangian morphisms with codomain $\overline{X}\times Y$. 
Now, we can use that derived lagrangian intersections are symplectic (see \cite{PTVV}) to get an $(n-1)$-shifted symplectic structure 
on $L\underset{\overline{X}\times Y}{\times}M$. 
It turns out that $2$-morphisms $L\Rightarrow M$ are exactly lagrangian morphisms with codomain $L\underset{\overline{X}\times Y}{\times}M$. 

\subsection{Important examples}

\subsubsection{Shifted cotangent and conormal stacks}
\label{sssec-cotcon}

Let $X$ be a derived Artin stack. 
Then the shifted cotangent stack $\coT[n]X$ comes equipped with a tautological $1$-form $\lambda_X$ of degree $n$, which corresponds to the classifying 
map $id:\coT[n]X\to \coT[n]X$. 
Then the degree $n$ closed $2$-form $\omega_X:=d_{dR}\lambda_X$ is non-degenerate (and thus is an $n$-shifted symplectic structure). Moreover, the zero section $X\to\coT X$ is naturally equipped with a lagrangian structure. 
We refer to \cite{Cal17} for more details. 
\begin{Exa}
Recall that, for an algebraic group $G$, $\coT[n]\B G \simeq \g^*[n-1]/G$ (see \Cref{sec-momentmaps} for an explicit description of the shifted symplectic structure). 
\end{Exa}

\medskip

For every morphism $f:X\to Y$ of derived Artin stacks we have a commuting diagram 
\[
\begin{tikzcd}[sep=small]
& f^*\coT[n]Y \ar[dl] \ar[dr]	&\\
\coT[n]X \ar[rd,"\lambda_X"'] && \coT[n]Y \ar[ld,"\lambda_Y"]\,.	\\
& \mathcal{A}^1(n)
\end{tikzcd}
\]
\begin{Exa}
If $Y=\pt$, then $f^*\coT[n]Y\simeq X\to \coT[n]X$ is the zero section. If $X=\pt$, then $f=y$ is a point in $Y$, and 
$f^*\coT[n]Y\simeq T^*_yY[n]\to \coT[n]Y$ is the fiber at $y$ of the shifted cotangent. 
\end{Exa}
Composing with $d_{dR}:\Ap 1 n\to\Apcl 2 n$ thus produces an isotropic correspondence, which can be shown to be a lagrangian one (along the same lines as in \cite{Cal17}). This leads to two maps 
\begin{eqnarray*}
f^\dagger\,:\,\Lag {\coT[n]Y}											& \longrightarrow & \Lag {\coT[n]X} \\
\mathrm{and}\qquad f_\dagger\,:\,\Lag{\coT[n]X}	& \longrightarrow & \Lag{\coT[n]Y}\,.
\end{eqnarray*}
Associativity of the composition of lagrangian correspondences tells us that for two lagrangian morphisms $L_X\to \coT[n]X$ and $L_Y\to \coT[n]Y$, we have an equivalence of $(n-1)$-shifted symplectic derived stacks 
\begin{equation}
\label{eq-push-pull}
f_\dagger(L_X)\underset{\coT[n]Y}{\times}L_Y\simeq L_X\underset{\coT[n]X}{\times}f^\dagger(L_Y)\,.
\end{equation}
\begin{Rem}
Actually, $f^\dagger$ more generally defines a functor 
\[
\bfLag_{\coT[n]Y/}\longrightarrow\bfLag_{\coT[n]X/}
\]
and $f_\dagger$ defines a functor 
\[
\bfLag_{/\coT[n]X}\longrightarrow\bfLag_{/\coT[n]Y}\,.
\]
Moreover, if we are given two lagrangian correspondences 
\[
\begin{tikzcd}[sep=small]
& L_X	\ar[rd]\ar[ld]\\
W 	&& \coT[n]X
\end{tikzcd}
\quad\mathrm{and}\quad
\begin{tikzcd}[sep=small]
& L_Y	\ar[rd]\ar[ld]\\
\coT[n]Y && Z
\end{tikzcd}
\]
then the equivalence \eqref{eq-push-pull} still holds (but in $\Map \bfLag W Z$ now).  
\end{Rem}
\begin{Exa}\label{example-conormal}
Observe that the underlying morphism of $f_\dagger(X\overset{0}{\to}\coT[n]X)$ is the shifted conormal  
\[
\coN[n]f\,:\,\coT_X[n]Y\longrightarrow \coT[n]Y\,,
\]
where $\coT_X[n]Y$ is the stack of sections of the relative tangent complex $\mathbb{L}_f[n-1]$, and $\coN[n]f$ is induced by the morphism 
$\mathbb{L}_f[n-1]\to f^*\mathbb{L}_Y$. Therefore, $\coN[n]f$ is equipped with a lagrangian structure (as shown in \cite{Cal17}). 
In particular, if we are given a sequence $X\overset{f}{\to}Y\overset{g}{\to}Z$ of derived Artin stacks, we get the following composition of 
lagrangian correspondences: 
\[
\begin{tikzcd}[sep=small]
&&& \coT_X[n]Z\ar[dl]\ar[dr]	&&&\\
&& \coT_X[n]Y\ar[dl]\ar[dr]	&& f^*g^*\coT[n]Z\ar[dl]\ar[dr] 	&&	\\
& X\ar[dr,"0"]\ar[dl]	&& f^*\coT[n]Y\ar[dl]\ar[dr]	&& g^*\coT[n]Z\ar[dl]\ar[dr]	&	\\\pt_{(n)}	&& \coT[n]X && \coT[n]Y && \coT[n]Z \,.
\end{tikzcd}
\]
\end{Exa}
\begin{Rem}
A more conceptual way of reformulating the above is to say that $\coT[n]$ defines an $(\infty,1)$-functor 
$\mathbf{dSt}^{Art}\longrightarrow \bfLag_n$, where $\mathbf{dSt}^{Art}$ is the full sub$(\infty,1)$-category of $\mathbf{dSt}$ 
spanned by derived Artin stacks. 
\end{Rem}
\begin{Rem}
We let the reader check that the following $n$-shifted symplectic derived stacks are all equivalent: 
\[
\coT[n]X\,,\quad X\underset{\coT[n+1]X}{\times}X\,,\quad\mathrm{and}\quad \coT_X[n+1]X\,.
\]
Here we implicitly make use of the identification between $n$-shifted symplectic derived stacks and lagrangian morphisms with codomain $\pt_{(n+1)}$ (see \cite{Cal15}). 
\end{Rem}

\subsubsection{Derived critical locus}

Let $X$ be a derived Artin stack, and let $\alpha$ be a closed $1$-form of degree $n$ on $X$. 
Then recall from \cite{Cal17} that the section $X\to \coT[n]X$ of the shifted cotangent stack given by its underlying $1$-form $\alpha_0$ of degree $n$ 
comes equipped with a lagrangian structure. We actually have a map 
\[
\Graph\,:\,\Apcl 1 {X,n} \longrightarrow \Lag{\coT[n]X}\,.
\]
\begin{Def}
We define the $(n-1)$-shifted symplectic derived stack $Z(\alpha)$ as the derived lagrangian intersection 
\[
Z(\alpha):=\Graph(\alpha)\underset{\coT[n]X}{\times}\Graph(0)=\Graph(\alpha)\underset{\coT[n]X}{\times}X
\]
of $\Graph(\alpha)$ with the zero section $\Graph(0)$. 
If $\alpha=d_{dR}f$, where $f$ is a degree $n$ function on $X$, then we write 
\[
\Crit(f):=Z(d_{dR}f)
\]
and call it the \textit{derived critical locus} of $f$. 
\end{Def}
\begin{Rem}
Observe that the zero section is nothing else than $\coN[n]q$ (see \cref{example-conormal}), where $q:X\to \pt$ is the terminal morphism. 
Hence $Z(\alpha)$ is by definition $q_\dagger \Graph(\alpha)$. 
\end{Rem}
\begin{Exa}[Twisted cotangent bundles]\label{ex-twisted}
Let $X$ be a smooth scheme, and consider its shifted cotangent stack $\coT[1]X$. Recall from \cite{PTVV} that: 
\begin{itemize}
\item $\pi_0\big(\Ap 1 {X,1}\big)\simeq H^1(X,\Omega^1_X)$. 
\item $\pi_0\big(\Apcl 1 {X,1}\big)\simeq H^1(X,\Omega^{1,cl}_X)$. 
\end{itemize}
Hence any element $\alpha\in H^1(X,\Omega^{1,cl}_X)$ provides us with a lagrangian morphism $X\to \coT[1]X$. 
Its derived intersection $Z(\alpha)$ with the zero section thus inherits a $0$-shifted symplectic structure. 
It can be identified with the twisted cotangent bundle $\coT_\alpha X$ (see \cite{Hab}). 
\end{Exa}
Let $f:X\to Y$ be a morphism of derived Artin stacks, and let $\alpha$ be a closed $1$-form of degree $n$ on $Y$. 
We have the following commuting diagram, where the square is cartesian: 
\[
\begin{tikzcd}[sep=small]
&& X \ar[dl]\ar[dr]\ar[ddll,"f^*\alpha_0"', bend right]\\
& f^*\coT[n]Y\ar[dl]\ar[dr]	&& Y \ar[dl,"\alpha_0"']\ar[dr]\\
\coT[n]X 	&& \coT[n]Y && \pt\,.
\end{tikzcd}
\]
\begin{Lem}\label{lem-graph-pullbacks}
The above diagram still holds over $\Apcl 2 n$. 
Therefore there is an equivalence $f^\dagger\big(\Graph(\alpha)\big)\simeq graph\big(f^*\alpha\big)$, in $\Lag{\coT[n]X}$.  
\end{Lem}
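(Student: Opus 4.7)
The plan is to treat the two assertions in order: first lift the diagram to $\mathbf{dSt}_{/\Apcl 2 n}$, then read off the advertised equivalence as a composition of lagrangian correspondences. Recall that $f^\dagger(\Graph(\alpha))$ is defined as the composition of $\Graph(\alpha):\pt_{(n)}\leftarrow Y\to\coT[n]Y$ with the canonical lagrangian correspondence $\coT[n]X\leftarrow f^*\coT[n]Y\to\coT[n]Y$, so both sides of the desired equivalence can be extracted from the lifted diagram.

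For the first assertion, one equips $\coT[n]X$ and $\coT[n]Y$ with their canonical symplectic forms $\omega_X=d_{dR}\lambda_X$ and $\omega_Y=d_{dR}\lambda_Y$, and $\pt$ with the zero form; the stacks $Y$, $X$, $f^*\coT[n]Y$ sit over $\Apcl 2 n$ via $0$. The section $\alpha_0:Y\to\coT[n]Y$ is a map over $\Apcl 2 n$ because $\alpha_0^*\lambda_Y=\alpha$ and the closedness datum of $\alpha$ provides the null-homotopy $d_{dR}\alpha\simeq 0$. Similarly, $f^*\alpha_0:X\to\coT[n]X$ is a map over $\Apcl 2 n$ via the closed structure on $f^*\alpha$ obtained from the naturality of the de~Rham differential. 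The correspondence $\coT[n]X\leftarrow f^*\coT[n]Y\to\coT[n]Y$ lives over $\Apcl 2 n$ because the map $f^*\coT[n]Y\to\coT[n]X$ is induced by the cofiber sequence $f^*\mathbb{L}_Y\to\mathbb{L}_X\to\mathbb{L}_{X/Y}$, which forces the two tautological 1-forms to become canonically equivalent on $f^*\coT[n]Y$, and hence also their de~Rham differentials. Finally the outer triangle through $f:X\to Y$ commutes in $\mathbf{dSt}_{/\Apcl 2 n}$ since all forms involved are pulled back through the terminal object.

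Given the lifted diagram, $f^\dagger(\Graph(\alpha))\simeq\Graph(f^*\alpha)$ follows by unwinding the composition. At the level of underlying stacks, $f^*\coT[n]Y\simeq X\times_Y\coT[n]Y$, and since $\alpha_0$ is a section of the bundle projection $\coT[n]Y\to Y$, the fiber product $Y\times_{\coT[n]Y}f^*\coT[n]Y$ collapses to $X$. Tracing the resulting map to $\coT[n]X$, a point $x\in X$ goes to $(x,\alpha_0(f(x)))\in f^*\coT[n]Y$ and then, under $f^*\mathbb{L}_Y\to\mathbb{L}_X$, to $f^*\alpha_0(x)$, recovering exactly $\Graph(f^*\alpha)$. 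The composed lagrangian structure identifies, via the equivalences recorded in the previous step, with the tautological one on $\Graph(f^*\alpha)$.

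The main obstacle is the coherence at the level of closed 2-form data: one must verify that composing the null-homotopy of $\alpha_0^*\omega_Y$ (from closedness of $\alpha$) with the comparison between the two tautological 1-forms on $f^*\coT[n]Y$ yields precisely the null-homotopy of $(f^*\alpha_0)^*\omega_X$ coming from the closedness of $f^*\alpha$. This amounts to the naturality $f^*d_{dR}\alpha\simeq d_{dR}f^*\alpha$ combined with the identity $\alpha_0^*\lambda_Y=\alpha$, and is essentially forced by the functoriality of $\DR{-}$ as a graded mixed complex. Non-degeneracy is automatic since both $\Graph(\alpha)$ and the canonical correspondence are lagrangian and composition in $\bfLag_n$ preserves non-degeneracy.
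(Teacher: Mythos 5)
Your proof is correct and follows essentially the same route as the paper's: both rest on the commutativity of the diagram over $\Ap 1 n$ via the tautological $1$-forms, the identification of the two pullbacks of $\lambda_X$ and $\lambda_Y$ on $f^*\coT[n]Y$, the closedness of $\alpha$ supplying the factorization through $\Apcl 1 n$, and an application of $d_{dR}$. The coherence issue you isolate at the end as "the main obstacle" is exactly what the paper disposes of in one stroke, by exhibiting the entire diagram over $\Ap 1 n$ (with the extra leg $Y\xrightarrow{\alpha}\Apcl 1 n$) and then composing with the commuting square relating $\Apcl 1 n\to\pt\to\Apcl 2 n$ and $\Apcl 1 n\to\Ap 1 n\xrightarrow{d_{dR}}\Apcl 2 n$, rather than assembling the homotopies arrow by arrow as you do.
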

\begin{proof}
By definition of the tautological $1$-form on shifted cotangent stacks, and from the fact that $\alpha_0$ lifts to a closed form, 
we have the following commuting diagram
\[
\begin{tikzcd}[sep=small]
&& X \ar[dl]\ar[dr]\ar[ddll,"f^*\alpha_0"', bend right]\ar[ddrr,"f^*\alpha", bend left]\\
& f^*\coT[n]Y\ar[dl]\ar[dr]	&& Y \ar[dl,"\alpha_0"]\ar[dr,"\alpha"']	\\
    \coT[n]X\ar[rrd] 	& 									& \coT[n]Y \ar[d]																	& & \Apcl 1 n \,.\ar[lld] \\
							&& \Ap 1 n &&    
\end{tikzcd}
\]
Then the result follows by composing the morphism $\Apcl 1 n\to\Ap 1 n$ with  
\[
\begin{tikzcd}
	\Apcl 1 n \ar[r]\ar[d] & \pt \ar[d] 					\\
	\Ap 1 n \ar[r,"d_{dR}"] 	& \Apcl 2 n  \,.
\end{tikzcd}
\]
\end{proof}


\section{The ``magic cube'' of derived symplectic reduction}\label{section2}

\subsection{The ``magic cube''}

We introduce a useful construction producing a lagrangian correspondence from three lagrangian correspondences sharing the same domain.
We consider $X_0$, $X_1$, $X_2$, and $X_3$, four $n$-shifted symplectic derived stacks, and three lagrangian correspondences $L_{0i}\in \Map \bfLag {X_0}{X_i}$. 
By convention, $L_{i0}\in \Map \bfLag {X_i}{X_0}$ is the opposite lagrangian correspondence. 
We consider the composed correspondences $L_{ij}:=L_{i0}\times_{X_0}L_{0j}\in \Map \bfLag {X_i}{X_j}$ and we define the cyclic derived lagrangian intersection
\begin{align*}
X_{123} & := (L_{10}\underset{X_0}{\times}L_{02}\underset{X_2}{\times}L_{20}\underset{X_0}{\times}L_{03}
					\underset{X_3}{\times}L_{30}\underset{X_0}{\times}L_{01})\underset{\overline{X_1}\times X_1}{\times}X_1\\
			&  = (L_{12}\underset{X_2}{\times}L_{23}\underset{X_3}{\times}L_{31})\underset{\overline{X_1}\times X_1}{\times}X_1 \,.
\end{align*}
In other words, the derived stack $X_{123}$ is defined as the limit of the hexagon of (lagrangian) correspondences
\[
\begin{tikzcd}[sep=small]
&& X_1 \\
&L_{01}\ar[ru]\ar[ld]&&L_{10}\ar[lu]\ar[rd]\\
X_0 && && X_0\\
L_{20} \ar[u]\ar[d]&&&& L_{03}\ar[u]\ar[d]\\
X_2 &&&& X_3 \,.\\
&L_{02}\ar[lu]\ar[rd]&&L_{30}\ar[ru]\ar[ld]\\
&& X_0
\end{tikzcd}
\]

We define also $L_{123}:=L_{01}\underset{X_0}{\times}L_{02}\underset{X_0}{\times}L_{03}$.
This stack fits in a cube
\[
\begin{tikzcd}[sep=small]
&&&&& X_1\\
&&& L_{123} \ar[ld]\ar[rrd]\ar[ddd] \\
&&L_{12} \ar[ddd]		&&& L_{13}\ar[ld]\ar[ddd]\\
&&&& L_{01} \ar[from=llu, crossing over]\ar[uuur, crossing over]\\
&&& L_{23} \ar[ld]\ar[rrd]\\
&&L_{02} \ar[rrd]\ar[lld] &&&	L_{03} \ar[ld]\ar[rrd]\\
X_2&&&& X_0 \ar[from=uuu,crossing over]&&& X_3
\end{tikzcd}
\]
(where we have draw extra legs to help see the three original correspondences).
Every face of the cube is cartesian and the object $L_{123}$ is the limit of the front three edges of the cube.
This is the \textit{magic cube} giving its name to the construction.
We shall see many examples.

\begin{Thm}
\label{thm:magic-cube}
The stack $X_{123}$ is $(n-1)$-shifted symplectic, and there exists a canonical morphism 
\[
L_{123}\longrightarrow X_{123}
\]
which is lagrangian.
\end{Thm}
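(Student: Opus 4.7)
My plan is to decompose the magic cube into a sequence of standard moves already discussed in the paper: composition of lagrangian correspondences, which remains lagrangian (\cite{Cal15,Hau}), and derived intersections of lagrangians in an $n$-shifted symplectic stack, which are $(n-1)$-shifted symplectic (\cite{PTVV}). For the symplectic structure on $X_{123}$, I read the defining hexagon as a triple composition $L_{31}\circ L_{23}\circ L_{12}\in\Map{\bfLag}{X_1}{X_1}$, equivalently a lagrangian morphism to $\overline{X_1}\times X_1$; taking its derived intersection with the diagonal lagrangian $X_1\to \overline{X_1}\times X_1$ (the identity correspondence on $X_1$) yields exactly $X_{123}$ and endows it with its $(n-1)$-shifted symplectic structure.

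For the canonical map $L_{123}\to X_{123}$, the cartesianity of each face of the cube provides natural projections $L_{123}\to L_{12},L_{13},L_{23}$ factoring through the $L_{0i}$'s. Concretely, a point of $L_{123}$ is a triple $(\ell_{01},\ell_{02},\ell_{03})$ with common image $x_0\in X_0$; using $L_{ij}=L_{i0}\times_{X_0}L_{0j}$, this produces compatible points of $L_{12},L_{23},L_{31}$ whose $X_2$- and $X_3$-projections match in the way demanded by the hexagon and whose two projections to $X_1$ coincide (both being the image of $\ell_{01}$), so that the whole data factors through the diagonal in $\overline{X_1}\times X_1$.

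The core step is equipping this map with a lagrangian structure. My preferred route is to work in the symmetric monoidal bicategory of 2-iterated lagrangian correspondences recalled in the excerpt (\cite{ABB,CHS}), where each 1-morphism $L_{0i}:X_0\to X_i$ acquires a dual $L_{i0}$ via the self-duality of the bicategory; the three associated unit/counit 2-cells glue, around the hexagon, into a canonical 2-morphism from the triple composite to $\mathrm{id}_{X_1}$. By the description given in the excerpt (a 2-morphism between 1-morphisms $X_1\to X_1$ is precisely a lagrangian morphism into their derived intersection in $\overline{X_1}\times X_1$), this 2-morphism is a lagrangian morphism into $X_{123}$ whose underlying derived stack is by construction $L_{123}$. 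Alternatively, closer to \cite{BB}, one can iterate a Beck--Chevalley-style cancellation across the cartesian faces of the cube to transport the known lagrangian structures on the edges $L_{0i}\to X_i$ to a lagrangian structure on $L_{123}\to X_{123}$.

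The main obstacle is verifying non-degeneracy, i.e.~identifying $\mathrm{cofib}\bigl(\mathbb{L}_{X_{123}}\to\mathbb{L}_{L_{123}}\bigr)\simeq \mathbb{T}_{L_{123}}[n-1]$. I would carry this out by using the cartesianity of each face of the cube to decompose both cotangent complexes as iterated cofibers indexed by the vertices $X_i$ and the edges $L_{0i}$; the paired contributions coming from the two opposite occurrences of each $X_i$ in the hexagonal composite cancel in sign-alternating pairs (reflecting the orientation of the hexagon), leaving exactly the shifted tangent complex, with the required non-degenerate pairing inherited from the symplectic forms on the $X_i$'s.
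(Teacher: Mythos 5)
Your preferred route is essentially the paper's own proof: the paper likewise realizes $X_{123}$ as the trace of the cyclic composite in the symmetric monoidal bicategory $\mathbf{Lag}_n^{(2)}$ (after a cyclic permutation of the trace so as to base it at $X_0$, where the three units compose cleanly), and obtains $L_{123}\to X_{123}$ by horizontally composing the three adjunction-unit $2$-cells $X_0\leftarrow L_{0i}\rightarrow L_{0i}\times_{X_i}L_{i0}$ and invoking the general fact that a $2$-morphism $\Id\Rightarrow f$ induces a $1$-morphism $\bfone\to\Tr{f}$. Your closing paragraph treating non-degeneracy as ``the main obstacle'' is superfluous on this route (and too sketchy to stand on its own): since the whole construction takes place in $\mathbf{Lag}_n^{(2)}$, whose $2$-morphisms are by definition lagrangian, the output is automatically non-degenerate --- this is precisely what the $2$-categorical formalism buys.
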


\begin{Rem}
This theorem has two natural generalizations (which can be proven similarly).
The first one is that the result holds for $n$-presymplectic stacks and isotropic correspondences.
The second one is that it holds more generally starting with an arbitrary number of correspondences sharing the same domain $X_0 \leftarrow L_{0i}\to X_k$.
Then, the cyclic derived lagrangian intersection 
\[
X_{12\dots k}:=(L_{12}\underset{X_2}{\times}\cdots \underset{X_k}{\times}L_{k1})\underset{\overline{X_1}\times X_1}{\times}X_1
\]
is $(n-1)$-shifted symplectic, and the isotropic morphism 
\[
L_{12\dots k}:=L_{01}\underset{X_0}{\times}\cdots \underset{X_0}{\times}L_{0k}\longrightarrow X_{12\dots k}
\]
is lagrangian. 
\end{Rem}

The first part of the statement is a particular instance of \cite[Theorem 3.1]{BB} (with $S=\pt_{(n)}$). 
The proof is simple and boils down to the fact that a cyclic composition of lagrangian 
correspondences is shifted symplectic: the shifted closed $2$-form is given by a loop in the space of closed $2$-forms, and checking non-degeneracy is standard. 
The second part of the statement can be proved using a similar reasoning as in \textit{loc.~cit.}, but we shall provide a more conceptual proof.

First, we need some recollection about traces in 2-categories with duals.
Recall from \cite{ABB,CHS} the symmetric monoidal $(\infty,2)$-category $\mathbf{Lag}_n^{(2)}$ of $2$-iterated lagrangian correspondences between $n$-shifted symplectic derived stacks (see \cite{Hau} for the isotropic version). 
This category has the following features: 
\begin{itemize}
\item The monoidal structure is $\times$, and its unit is $\pt_{(n)}$. 
\item All objects are dualizable, and the dual of an $n$-shifted symplectic derived stack $X$ is $\overline{X}$. 
Evaluation and coevaluation are both, as the identity $1$-morphism, given by the diagonal correspondence: 
\[
X\times\overline{X}\leftarrow X\rightarrow \pt_{(n)}\quad\textrm{and}\quad
\pt_{(n)}\leftarrow X\rightarrow \overline{X}\times X\,.
\]
\item All $1$-morphisms have adjoints, and the left and right adjoints of a lagrangian correspondence ${X\leftarrow L\rightarrow Y}$ are both the original lagrangian 
correspondence that we read the other way: $Y\leftarrow L\rightarrow X$. 
The unit of the adjunction is the lagrangian correspondence $X\leftarrow L \rightarrow L\underset{Y}{\times} L$. 
\end{itemize}
We see in particular that, for a $1$-endomorphism $X\leftarrow L\rightarrow X$, its trace is 
\[
\Tr L = L\underset{\overline{X}\times X}{\times}X\,.
\]

\begin{proof}[Proof of \Cref{thm:magic-cube}]
The previous formula applies in particular to get
\begin{align*}
X_{123} 
&\simeq \Tr {L_{10}\circ L_{02}\circ L_{20}\circ L_{03}\circ L_{30} \circ L_{01}}\\
&\simeq \Tr {L_{01}\circ L_{10}\circ L_{02}\circ L_{20}\circ L_{03}\circ L_{30}} \,.
\end{align*}
where the second equivalence comes from the cyclicity of the trace.

For every $i=1,2,3$, we have a unit $2$-morphism
\[
X_0\longleftarrow  L_{0i}\longrightarrow L_{0i}\underset{X_i}{\times} L_{i0}\,.
\]
Composing these (horizontally) we get a $2$-morphism
\[
X_0\longleftarrow L_{123} \longrightarrow 
L_{01}\underset{X_1}{\times} L_{10}\underset{X_0}{\times}L_{02}\underset{X_2}{\times} L_{20}
\underset{X_0}{\times}L_{03}\underset{X_3}{\times} L_{30}\,.
\]
Now, recall that, in a symmetric monoidal bicategory with duals and adjoints, 
if $f:x\to x$ is a $1$-endomorphism and $\alpha:\Id_x\Rightarrow f$ is a $2$-morphism, we have a $1$-morphism $\bfone\to\Tr f$ (where $\bfone$ is the monoidal unit).
Let us explicit the construction in terms of correspondences.
We fix a 1-endomorphism $1\leftarrow L \to \overline{X}\times X$
and a 2-morphism $L\leftarrow M \to X$. 
The trace of $L$ fits into the diagram
\[
\begin{tikzcd}[sep=small]
&& M \ar[rdd,bend left]\ar[ldd,bend right] \ar[d,dashed]\\
&& \Tr L \ar[rd] \ar[ld]\\
& L \ar[rd]\ar[ld]&& X\ar[rd]\ar[ld] \\
1 && \overline{X} \times X && 1 \,.
\end{tikzcd}
\]
where the middle square is cartesian.
The 1-morphism $\bfone\to\Tr L$ is the lagrangian morphism $M \to \Tr L$.
We use this to get a $1$-morphism
\[
\pt_{(n)}\longleftarrow L_{123} \longrightarrow X_{123}\,.
\]
This is the lagrangian morphism we were looking for.
\end{proof}

\subsection{Shifted moment maps and derived symplectic reduction}\label{sec-momentmaps}

Let $G$ be an algebraic group, and let $\B G$ be its classifying stack. 
Then the shifted cotangent stack $\coT[n]\B G$ can be identified with the (shifted) coadjoint quotient stack $\g^*[n-1]/G$, where $\g$ is the Lie algebra of the group $G$. 
Through this identification, the tautological $1$-form of degree $n$ is 
\[
\lambda=\sum_ix_i\otimes \xi^i\,,
\] 
where the $x_i$ are a basis of $\g$ (and thus a choice of linear coordinates on $\g^*$), and $\xi^i$ are the dual basis. 
Hence 
\[
\omega=\sum_i (d_{dR}x_i) \otimes \xi^i\,.
\]
Recall the two lagrangian morphisms $\B G\to \coT[n]\B G=\g^*[n-1]/G$ and $\g^*[n-1]\to \g^*[n-1]/G$. 
The first one is the zero section, and the second one is the conormal morphism $\coN[n](\pt\to \B G)$. 
\begin{Def}\label{def:momentmap}
Let $X$ be an $n$-shifted symplectic derived stack together with a $G$-action and a $G$-equivariant morphism $\mu:X\to \g^*[n]$. 
An \textit{$n$-shifted moment map structure} on $\mu$ is a lagrangian structure on $\mu\red: X/G\to \g^*[n]/G$ together with an equivalence 
\begin{equation}
\label{eqn:reduction1}
X/G\underset{\g^*[n]/G}{\times}\g^*[n]\simeq X
\end{equation}
of $n$-shifted symplectic derived stacks. We define the \textit{derived symplectic reduction} of $X$ as the lagrangian intersection 
\begin{equation}
\label{eqn:reduction2}
X\red:= X/G\underset{\g^*[n]/G}{\times}\B G\,.
\end{equation}
\end{Def}

We refer to \cite{Cal15,SafCS} for why ordinary symplectic reduction fits into this framework. 
\begin{Rem}\label{remark-Zmu}
This is a particular instance of the magic cube (notice the faces corresponding to equations \eqref{eqn:reduction1} and \eqref{eqn:reduction2}, and that the vertical maps are all quotients by some action of $G$):
\[
\begin{tikzcd}[sep=scriptsize]
&&& Z(\mu) \ar[ld]\ar[rrd]\ar[ddd] && \pt\\
&&\pt \ar[ddd]		&&& X\ar[ld,"\mu"]\ar[ddd]\\
&&&& \g^*[n] \ar[from=llu, crossing over,"0" ]\ar[uur, crossing over] \ar[rdd,phantom, "\eqref{eqn:reduction1}"]\\
&&& X\red \ar[ld]\ar[rrd]\ar[rdd,phantom, "\eqref{eqn:reduction2}"]\\
&&\B G \ar[rrd]\ar[lld] &&&	{ X/G} \ar[ld,"\mu\red"]\ar[rrd]\\
\pt&&&& 
\begin{array}{c} \g^*[n]/G \\ \shortparallel\\ \coT[n+1]\B G\end{array} \ar[from=uuu,crossing over]&&& \pt \,.
\end{tikzcd}
\]
In particular, $L_{123}$ can be seen to be the derived zero locus of the moment map $Z(\mu):=X\underset{\g^*[n-1]}{\times}\pt$, and that 
$X_{123}\simeq X\red\times \overline{X}$. Hence \Cref{thm:magic-cube} ensures that $Z(\mu)$ is a lagrangian correspondence between $X$ and $X_{red}$. 
\end{Rem}

\begin{Exa}\label{example-reducedpoint}
If $X=\pt_{(n)}$, then the zero map $\pt\to \g^*[n]$ is an $n$-shifted moment map and 
\[
(\pt_{(n)})_{red}=\B G\underset{\g^*[n]/G}{\times}\B G\simeq \g^*[n-1]/G\,.
\]
\end{Exa}

\begin{Exa}\label{example-reduction}
Let $M$ be a derived Artin stack acted on by $G$: $M$ is the pullback of the point of $\B G$ along $p: M/G\to \B G$. 
Then its shifted cotangent stack $X=\coT[n]M$ inherits a $G$-action: the stack $(\coT[n]M)/G\simeq \coT_{M/G}[n+1]\B G$ is the 
stack of sections of the relative cotangent complex $\mathbb{L}_p[n]$. 
The morphism $\mathbb{L}_p\to p^*\mathbb{L}_{\B G}[1]$ thus induces a morphism $(\coT[n]M)/G\to \coT[n+1]\B G\simeq \g^*[n]/G$. 
This morphism is $\coN[n+1]p$ (see \cref{example-conormal}), and as such it carries a lagrangian structure. 
It defines an $n$-shifted moment map for $\coT[n]M$ with derived symplectic reduction being $\coT[n] (M/G)$. 
Indeed, according to \cite[Remark 2.20]{Cal17} we have the following equivalences of $n$-shifted symplectic derived stacks: 
\[
\coT_{ M/G}[n+1]\B G\underset{\coT[n+1]\B G}{\times}\coT_{\pt}[n+1]\B G\simeq \coT[n]M
\]
and
\[
\coT_{ M/G}[n+1]\B G\underset{\coT[n+1]\B G}{\times}\coT_{\B G}[n+1]\B G\simeq \coT[n](M/G)\,.
\]
\end{Exa}
\begin{Rem}\label{remark-reduction}
In the situation of \Cref{example-reduction}, the lagrangian correspondence 
\[
\coT[n]M\leftarrow Z(\mu)\rightarrow \coT[n] (M/G)
\]
from \Cref{remark-Zmu} is equivalent to the lagrangian correspondence $\coT[n]\big(\overline{f}:M\to M/G\big)$, that is 
\[
\coT[n]M\leftarrow \overline{f}^*\coT[n] (M/G)\rightarrow \coT[n] (M/G)\,.
\]
Indeed, \Cref{example-reduction} is actually a special case of a more general situation. 
Consider a cartesian square 
\[
\begin{tikzcd}
	M \ar[r,"\overline{f}"] \ar[d,"\overline{p}"'] & X \ar[d,"p"] \\
	P \ar[r,"f"]				& B
\end{tikzcd}
\]
of derived Artin stacks (in the example $P=\pt$ and $B=\B G$). 
In this situation, we have the following magic cube: 
\[
\begin{tikzcd}[sep=scriptsize]
&&& \begin{array}{c}\overline{p}^*\coT[n]P\\ \times\\ \overline{f}^*\coT[n]X\end{array}\ar[ld]\ar[rrd]\ar[ddd] && \pt\\
&&\coT[n]P \ar[ddd]		&&& \coT[n]M\ar[ld]\ar[ddd]\\
&&&& \coT_P[n+1]B \ar[from=llu, crossing over]\ar[uur, crossing over]\\
&&& \coT[n]X \ar[ld]\ar[rrd]\\
&&B \ar[rrd]\ar[lld] &&&	\coT_X[n+1]B \ar[ld]\ar[rrd]\\
\pt&&&& \coT[n+1]B \ar[from=uuu,crossing over]&&& \pt\,.
\end{tikzcd}
\]
In the example ($P=\pt$, $B=\B G$, and $X=M/G$), the top corner of the cube (which is defined to be $Z(\mu)$ in \Cref{remark-Zmu}) 
becomes $\overline{f}^*\coT[n] (M/G)$. 
\end{Rem}

\subsection{Derived symplectic reduction of lagrangian morphisms}

\begin{Def}\label{def:lag-reduction}
Let $X\to\g^*[n]$ be an $n$-shifted moment map as in \Cref{def:momentmap}, and $L\to X$ be a lagrangian morphism. 
Recall the lagrangian correspondence $X\red\leftarrow Z(\mu)\rightarrow X$ from \Cref{remark-Zmu}. 
A \textit{derived symplectic reduction} of $L$ along $\mu$ is a lagrangian morphism $L\red\to X\red$ together with an equivalence 
\[
L\red\underset{X\red}{\times}Z(\mu)\simeq L
\] 
of lagrangian morphisms with codomain $X$. 
\end{Def}
\begin{Rem}\label{rem:lag-reduction}
Equivalently, we have a composition of lagrangian correspondences
\[
\begin{tikzcd}
&&L\ar[rd]\ar[ld]\\
&L\red \ar[rd]\ar[ld] && Z(\mu)\ar[rd]\ar[ld]\\
1&&X\red &&X
\end{tikzcd}
\]
(where the square is cartesian).
Recall that $Z(\mu)\to X\red$ is the quotient of the $G$-action on $Z(\mu)$.
This implies that the map $L\to L\red$ is also the quotient map of a $G$-action on $L$.
In particular, we see that if $L$ admits a derived symplectic reduction, it inherits a $G$-action such that $L\red\simeq  L/G$.
Moreover the map $L\to X$ is $G$-equivariant and factors through $Z(\mu)$. 
\end{Rem}

\begin{Exa}\label{example-lagrangianreduction}
Keeping the notation from the previous subsection, let $\alpha$ be a degree $n$ closed $1$-form on $X= M/G$, and consider $\beta=g^*\alpha$ (where $g$ is the quotient morphism 
$M\to  M/G$). 
Then \Cref{lem-graph-pullbacks} tells us that $\Graph(\alpha)\simeq \Graph(\beta)/G$ is a derived symplectic reduction of $\Graph(\beta)$. 
\end{Exa}
\begin{Exa}\label{example-reduction-as-lagrangian-reduction}
Recall from \Cref{example-reducedpoint} that $(\pt_{(n)})_{red}\simeq \g^*[n-1]/G$. Hence a derived symplectic reduction of a lagrangian $L\to \pt_{(n)}$ 
(that is, an $(n-1)$-shifted symplectic $L$) corresponds to the data of an $(n-1)$-shifted moment map $L\to\g^*[n-1]$, with $L_{red}\simeq L/G\to\g^*[n-1]/G$. 
\end{Exa}


\section{Relative derived critical loci}\label{section3}

\subsection{Relative zeroes of closed $1$-forms}
\label{sec:relative-zeroes}

\begin{Def}[Relative derived critical locus]
Let $p:X\to B$ be a morphism of derived Artin stacks and $\alpha$ be a closed $1$-form of degree $n$ on $X$.
The lagrangian morphism of \textit{zeroes of $\alpha$ relative to $p$} is defined as 
\[
Z_p(\alpha):=p_\dagger\big(\Graph(\alpha)\big)\,.
\]
When $\alpha=d_{dR}f$, we name it the \textit{relative (or, constrained) derived critical locus} of $f$ along $p$, and write 
\[
\Crit_p(f):=Z_p(d_{dR}f)\,.
\]
\end{Def}
\begin{Rem}
The above definition is inspired by \cite{RS}, where the authors work in the differentiable context and assume that $p$ is a surjective submersion between manifolds. 
Of course, in derived geometry, such an assumption is not necessary anymore.
\end{Rem}
Let now $X\overset{p}{\longrightarrow}Y\overset{q}{\longrightarrow}B$ be a sequence of derived Artin stacks. 
\begin{Lem}\label{lem:compo-zeroes}
There is an equivalence $q_\dagger\big(Z_p(\alpha)\big)\simeq Z_{q\circ p}(\alpha)$, in $\Lag{\coT[n]B}$. 
\end{Lem}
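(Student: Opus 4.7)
The plan is to reduce the equivalence $q_\dagger\big(Z_p(\alpha)\big)\simeq Z_{q\circ p}(\alpha)$ to the functoriality of the push operation $f\mapsto f_\dagger$ under composition, and then evaluate at the graph of $\alpha$. Unfolding definitions,
\[
Z_{q\circ p}(\alpha) = (q\circ p)_\dagger\big(\Graph(\alpha)\big)\qquad\textrm{and}\qquad q_\dagger\big(Z_p(\alpha)\big) = \big(q_\dagger\circ p_\dagger\big)\big(\Graph(\alpha)\big),
\]
so it suffices to produce a natural equivalence $q_\dagger\circ p_\dagger\simeq (q\circ p)_\dagger$ of functors $\bfLag_{/\coT[n]X}\to\bfLag_{/\coT[n]B}$.

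First I would spell out both sides as composed lagrangian correspondences. By construction, $q_\dagger\circ p_\dagger$ is the composition of
\[
\coT[n]X\leftarrow p^*\coT[n]Y\to\coT[n]Y\qquad\textrm{and}\qquad\coT[n]Y\leftarrow q^*\coT[n]B\to\coT[n]B,
\]
which yields
\[
\coT[n]X\leftarrow p^*\coT[n]Y\underset{\coT[n]Y}{\times} q^*\coT[n]B\to\coT[n]B,
\]
while $(q\circ p)_\dagger$ is represented by the single correspondence $\coT[n]X\leftarrow (q\circ p)^*\coT[n]B\to\coT[n]B$.

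The main step is then the canonical equivalence
\[
p^*\coT[n]Y\underset{\coT[n]Y}{\times}q^*\coT[n]B\;\simeq\;(q\circ p)^*\coT[n]B
\]
of objects sitting between $\coT[n]X$ and $\coT[n]B$. At the level of underlying derived stacks, this is a straightforward pullback manipulation, and indeed the associated cartesian pasting is exactly the one displayed in \Cref{example-conormal} for the sequence $X\overset{p}{\to}Y\overset{q}{\to}B$. What really requires checking is that this equivalence is compatible with the tautological $1$-forms $\lambda_X$, $\lambda_Y$, $\lambda_B$ (hence with the closed $2$-forms and the lagrangian data). This is precisely a manifestation of the fact, recorded in the Remark following \Cref{example-conormal}, that $\coT[n]$ assembles into an $(\infty,1)$-functor $\mathbf{dSt}^{Art}\to\bfLag_n$; in particular it sends the composite $q\circ p$ to the composition of the lagrangian correspondences associated to $p$ and $q$.

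Finally, evaluating this functorial equivalence at the lagrangian $\Graph(\alpha)\to\coT[n]X$ produces the desired equivalence $q_\dagger\big(Z_p(\alpha)\big)\simeq Z_{q\circ p}(\alpha)$ in $\Lag{\coT[n]B}$. I expect the only real obstacle to be the bookkeeping of the non-degeneracy and lagrangian data through the composition of correspondences, but this is guaranteed abstractly by the closure of $\bfLag_n$ under composition together with the functoriality of $\coT[n]$ recalled above, so no further explicit calculation is required.
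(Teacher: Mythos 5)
Your proposal is correct and follows essentially the same route as the paper: the paper's proof is precisely the one-line identification $q_\dagger\big(Z_p(\alpha)\big)=q_\dagger p_\dagger\Graph(\alpha)\simeq(q\circ p)_\dagger\Graph(\alpha)=Z_{q\circ p}(\alpha)$ via associativity of composition of lagrangian correspondences, followed by the same pasted-correspondence diagram you describe. Your additional unpacking of $p^*\coT[n]Y\underset{\coT[n]Y}{\times}q^*\coT[n]B\simeq(q\circ p)^*\coT[n]B$ and its compatibility with the tautological forms is exactly the content the paper delegates to the functoriality of $\coT[n]$.
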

\begin{proof}
This is a direct consequence of associativity of the composition of lagrangian correspondences: 
\[
q_\dagger\big(Z_p(\alpha)\big)=q_\dagger p_\dagger \Graph(\alpha)\simeq (q\circ p)_\dagger \Graph(\alpha)=Z_{q\circ p}(\alpha)\,.
\]
\end{proof}
In diagrammatic terms, the above proof reads as 
\[
\begin{tikzcd}[sep=small]
		&										&														& Z_{q\circ p}(\alpha)\ar[dl]\ar[dr]	& 								&							&			\\
		&										& Z_p(\alpha)\ar[dl]\ar[dr]	& 													& p^*q^*\coT[n]B\ar[dl]\ar[dr] 	&							&	\\
		& X\ar[dr,"\alpha"]\ar[dl]	& 													& p^*\coT[n]Y\ar[dl]\ar[dr]	&									& q^*\coT[n]B\ar[dl]\ar[dr]	&	\\
		\pt	&								& \coT[n]X 									& 													& \coT[n]Y 				& 						& \coT[n]B \,.
    \end{tikzcd}
\]

\begin{Exa}\label{usefulexa}
In particular, if we apply the above Lemma to the sequence $X\overset{p}{\to}B\overset{q}{\to} \pt$, then we get that the $(n-1)$-shifted symplectic stack $Z(\alpha)$ is equivalent to $q_\dagger Z_p(\alpha)$, which is the derived lagrangian intersection of $Z_p(\alpha)$ with the zero section in $\coT[n]B$. 
Moreover, if $B=\B G$ is the classifying stack of an algebraic group $G$ we get: 
a lagrangian morphism $Z_p(\alpha) \to \coT[n]\B G\simeq \g^*[n-1]/G$,
such that $Z(\alpha)$ is the lagrangian intersection of $Z_p(\alpha)$ with $\B G \to \g^*[n-1]/G$. 
In other words, $Z(\alpha)$ can be obtained as an $(n-1)$-shifted symplectic reduction. 

In this situation, we can also define a cartesian square
\[
\begin{tikzcd}
	M \ar[r,"\overline{f}"] \ar[d,"\overline{p}"']& X \ar[d,"p"] \\
	\pt \ar[r,"f"']				& \B G\,.
\end{tikzcd}
\]
Let $\beta$ be the pullback of $\alpha$ along $\overline f : M\to X$.
We will see in \Cref{usefulcor} that $Z(\beta)=Z_{\overline{p}}(\beta)$ is closely related to $Z_p(\alpha)$; this will allow us to prove (\Cref{thm-main}) 
that $Z(\alpha)$ is a derived symplectic reduction of $Z(\beta)$. 
\end{Exa}

\subsection{Pullbacks}\label{ssec-pullbacks}

We abstract the situation of \Cref{usefulexa} by considering 
a cartesian square of derived Artin stacks 
\[
\begin{tikzcd}
	M \ar[r,"\overline{f}"] \ar[d,"\overline{p}"'] \ar[rd,"h" description] & X \ar[d,"p"] \\
	P \ar[r,"f"']				& B    \,.
\end{tikzcd}
\]
Recall from \Cref{sssec-cotcon}, the associated square
\[
\begin{tikzcd}
	\Lag {\coT[n]M} \ar[from=r,"\overline{f}^\dagger"'] \ar[d,"\overline{p}_\dagger"'] & \Lag{\coT[n]X} \ar[d,"p_\dagger"] \\
	\Lag {\coT[n]P} \ar[from=r,"f^\dagger"']				& \Lag {\coT[n]B} \,.
\end{tikzcd}
\]
The following proposition says that this new square commutes.

\begin{Prop}[Beck--Chevalley condition]\label{prop-BC}
There is an equivalence $f^\dagger p_\dagger\cong \overline{p}_\dagger (\overline{f})^\dagger$ in the space of maps from $\Lag{\coT[n]X}$ to $\Lag{\coT[n]P}$. 
\end{Prop}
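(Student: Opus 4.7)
The plan is to express both sides as composition with explicit lagrangian correspondences between $\coT[n]X$ and $\coT[n]P$, and then to identify the two resulting correspondences using derived base change for the cotangent complex.

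By associativity of composition of lagrangian correspondences (see \eqref{eq-push-pull}), for any $L\to\coT[n]X$ one has
\[
f^\dagger p_\dagger(L)\simeq L\underset{\coT[n]X}{\times}\Big(p^*\coT[n]B\underset{\coT[n]B}{\times}f^*\coT[n]B\Big)
\]
and
\[
\overline{p}_\dagger\overline{f}^\dagger(L)\simeq L\underset{\coT[n]X}{\times}\Big(\overline{f}^*\coT[n]X\underset{\coT[n]M}{\times}\overline{p}^*\coT[n]P\Big).
\]
It therefore suffices to produce an equivalence between the two middle objects in $\bfLag_n$. The first is easy: by the universal property of fiber products and the equivalence $M\simeq X\times_B P$,
\[
p^*\coT[n]B\underset{\coT[n]B}{\times}f^*\coT[n]B\simeq M\underset{B}{\times}\coT[n]B = h^*\coT[n]B.
\]

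The key step is to show $h^*\coT[n]B\simeq \overline{f}^*\coT[n]X\times_{\coT[n]M}\overline{p}^*\coT[n]P$. This follows from the fact that the derived cartesian square $M=X\times_B P$ induces a cocartesian square of perfect complexes on $M$ with corners $h^*\mathbb{L}_B$, $\overline{f}^*\mathbb{L}_X$, $\overline{p}^*\mathbb{L}_P$, $\mathbb{L}_M$. Applying the linear-stack functor $\mathbb{V}({-}[n])$, which sends pushouts of perfect complexes to pullbacks of derived stacks, converts this pushout into exactly the desired expression of $h^*\coT[n]B$ as a fiber product over $\coT[n]M$. The identification is automatically compatible with the outgoing maps to $\coT[n]X$ and $\coT[n]P$ by naturality.

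The hard part will be lifting this equivalence of underlying correspondences to an equivalence of \emph{lagrangian} correspondences, i.e.~checking compatibility with the $n$-shifted closed $2$-forms recalled in \Cref{sssec-cotcon}. This amounts to a naturality statement for the tautological $1$-form $\lambda_-$ along the canonical morphisms in the square, but the bookkeeping is cumbersome. A cleaner alternative is to invoke the magic cube (\Cref{thm:magic-cube}) in the configuration described in \Cref{remark-reduction}, applied to the three lagrangians $\coT_X[n+1]B$, $\coT_P[n+1]B$, and the zero section $B\to \coT[n+1]B$ sharing the common target $\coT[n+1]B$; its top vertex $L_{123}$ then computes the common middle object together with its full lagrangian correspondence structure over $\coT[n]X$ and $\coT[n]P$ in one stroke, giving the equivalence $f^\dagger p_\dagger\simeq \overline{p}_\dagger \overline{f}^\dagger$ in $\Map{\mathbf{Cat}_\infty}{\Lag{\coT[n]X}}{\Lag{\coT[n]P}}$.
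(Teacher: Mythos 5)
Your identification of the underlying correspondences is exactly the paper's: both $f^\dagger p_\dagger$ and $\overline{p}_\dagger\overline{f}^\dagger$ are given by composing with a correspondence from $\coT[n]X$ to $\coT[n]P$; the first middle object is $p^*\coT[n]B\underset{\coT[n]B}{\times}f^*\coT[n]B\simeq h^*\coT[n]B$ (with $h=p\circ\overline{f}$), and the second is $\overline{f}^*\coT[n]X\underset{\coT[n]M}{\times}\overline{p}^*\coT[n]P\simeq h^*\coT[n]B$ by base change for cotangent complexes. These are precisely the paper's cartesian squares $(d)$ and $(a)$. The gap is exactly where you locate it: the compatibility with the lagrangian structures, which is the real content of the proposition, is never established. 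The route you set aside as cumbersome is in fact the paper's short argument: the entire diagram of pullback correspondences commutes over $\Ap 1 n$ (this is just the naturality, recalled in \Cref{sssec-cotcon}, of the correspondence $\coT[n]X\leftarrow f^*\coT[n]Y\to\coT[n]Y$ over $\Ap 1 n$), and composing with $d_{dR}:\Ap 1 n\to\Apcl 2 n$ then identifies all the isotropic structures in sight at once. There is essentially no bookkeeping to do beyond drawing the diagram.

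The magic-cube alternative, as stated, does not close the gap. Applied to the three lagrangian morphisms $\coT_X[n+1]B$, $\coT_P[n+1]B$ and the zero section $B$ into $X_0=\coT[n+1]B$ (with $X_1=X_2=X_3=\pt$), \Cref{thm:magic-cube} produces a lagrangian morphism from $L_{123}\simeq h^*\coT[n]B$ into $X_{123}$, which is the product of \emph{all three} pairwise intersections, i.e.~(up to duals) $\coT[n]X\times\coT[n]P\times\coT[n]M$. That is a lagrangian in a triple product, not a lagrangian correspondence between $\coT[n]X$ and $\coT[n]P$: one cannot simply forget the $\coT[n]M$ factor and remain lagrangian. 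Moreover, even if one extracted the desired correspondence by composing further, one would still have to identify its lagrangian structure with the two composite structures defining $f^\dagger p_\dagger$ and $\overline{p}_\dagger\overline{f}^\dagger$, which is exactly the $1$-form-level naturality you were trying to avoid. So the correct fix is to carry out the direct argument: record that each square of the diagram lives over $\Ap 1 n$ and apply $d_{dR}$.
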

\begin{proof}
We have the following commuting diagram of derived stacks 
\[
\begin{tikzcd}[sep=small]
&&\coT[n]M \ar[dd, phantom, "(a)"]&& \\
&\overline{f}^*(\coT[n]X)\ar[ld]\ar[ru]\ar[dd, phantom, "(b)"]
&&\overline{p}^*(\coT[n]P)\ar[rd]\ar[lu]\ar[dd, phantom, "(c)"]& \\
\coT[n]X&&
h^*(\coT[n]B)
\ar[ru]\ar[lu]\ar[rd]\ar[ld]\ar[dd, phantom, "(d)"]&&\coT[n]P \\
&p^*(\coT[n]B)\ar[rd]\ar[lu]&&f^*(\coT[n]B)\ar[ld]\ar[ru]& \\
&&\coT[n]B&&
\end{tikzcd}
\]
which actually holds over $\Ap 1 n$. 
The squares $(a)$ and $(d)$ are cartesian because the original square is.
As a consequence we get the equivalences 
\[
p^\dagger f^*(\coT[n]B)\simeq h^*(\coT[n]B)\simeq \overline{f}_\dagger\overline{p}^*(\coT[n]P)
\]
in $\Map \bfLag {\coT[n]X} {\coT[n]P}$. 
Therefore, for every $L\in\Lag{\coT[n]X}$ we have canonical equivalences in $\Lag{\coT[n]P}$: 
\begin{align*}
f^\dagger p_\dagger L 
& = \Big(L\underset{\coT[n]X}{\times}p^*(\coT[n]B)\Big)\underset{\coT[n]B}{\times}f^*(\coT[n]B) \\
& \simeq L\underset{\coT[n]X}{\times}h^*(\coT[n]B)
&& \left(\begin{array}{l}\text{using that $(d)$ is cartesian}\end{array}\right)\\
& \simeq \Big(L\underset{\coT[n]X}{\times}\overline{f}^*(\coT[n]X)\Big)\underset{\coT[n]M}{\times}\overline{p}^*(\coT[n]P)
&& \left(\begin{array}{l}\text{using the commutativity of $(b)$ and $(c)$}\\ \text{and that $(a)$ is cartesian}\end{array}\right)\\
& = \overline{p}_\dagger \overline{f}^\dagger L\,.
\end{align*}
\end{proof}
Let $\alpha$ be a $1$-form of degree $n$ on $X$, and write $\beta:=\overline{f}^*\alpha$. 
\begin{Cor}\label{usefulcor}
There is an equivalence $Z_{\overline{p}}(\beta)\simeq f^\dagger Z_p(\alpha)$, in $\Lag{\coT[n]P}$. 
\end{Cor}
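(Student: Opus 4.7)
The plan is to combine the Beck--Chevalley identity of \Cref{prop-BC} with the compatibility of the graph construction under pullback established in \Cref{lem-graph-pullbacks}. Both ingredients are already in hand, so the corollary should reduce to a one-line chain of equivalences.

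First, I would unfold the definitions: $Z_p(\alpha) = p_\dagger \Graph(\alpha)$ in $\Lag{\coT[n]B}$ and $Z_{\overline{p}}(\beta) = \overline{p}_\dagger \Graph(\beta)$ in $\Lag{\coT[n]P}$. Applying \Cref{prop-BC} to the lagrangian $\Graph(\alpha) \in \Lag{\coT[n]X}$ yields
\[
f^\dagger Z_p(\alpha) \;=\; f^\dagger p_\dagger \Graph(\alpha) \;\simeq\; \overline{p}_\dagger \overline{f}^\dagger \Graph(\alpha)
\]
in $\Lag{\coT[n]P}$.

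Next, I would invoke \Cref{lem-graph-pullbacks} applied to the morphism $\overline{f}:M\to X$ and the closed $1$-form $\alpha$ on $X$: this provides an equivalence $\overline{f}^\dagger \Graph(\alpha) \simeq \Graph(\overline{f}^*\alpha) = \Graph(\beta)$ in $\Lag{\coT[n]M}$. Substituting this into the previous line gives
\[
f^\dagger Z_p(\alpha) \;\simeq\; \overline{p}_\dagger \Graph(\beta) \;=\; Z_{\overline{p}}(\beta),
\]
which is the desired equivalence in $\Lag{\coT[n]P}$.

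Since both \Cref{prop-BC} and \Cref{lem-graph-pullbacks} are already phrased as equivalences in the appropriate $(\infty,1)$-categories of lagrangian correspondences, there is no real obstacle here; the only thing to keep an eye on is the functoriality of the pullback operation $\overline{f}^\dagger$ on lagrangians, so that the equivalence $\overline{f}^\dagger \Graph(\alpha) \simeq \Graph(\beta)$ propagates through the further operation $\overline{p}_\dagger$. This is formal given that $\overline{f}^\dagger$ and $\overline{p}_\dagger$ are the functors described in the remark of \Cref{sssec-cotcon}.
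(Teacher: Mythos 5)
Your argument is correct and is essentially identical to the paper's own proof: both apply the Beck--Chevalley equivalence of \Cref{prop-BC} to $\Graph(\alpha)$ and then conclude via $\overline{f}^\dagger\Graph(\alpha)\simeq\Graph(\beta)$ from \Cref{lem-graph-pullbacks}. Nothing further is needed.
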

\begin{proof}
We apply \Cref{prop-BC} to $\Graph(\alpha)$ to get
\[
f^\dagger p_\dagger \Graph(\alpha) \simeq  \overline{p}_\dagger \overline{f}^\dagger \Graph(\alpha)\,.
\]
The result follows using that $\overline{f}^\dagger \Graph(\alpha)\simeq \Graph(\beta)$.
\end{proof}

\begin{Rem}\label{magic-remark}
This is again an instance of the magic cube: 
\begin{equation}
\label[diagram]{diag:magic}
\stepcounter{equation}
\tag*{(\theequation)}
\begin{tikzcd}[sep=small]
&&&&& \coT[n]P\\
&&& Z_{\overline{p}}(\beta) \underset{\coT[n]B}{\times}B \ar[ld]\ar[rrd]\ar[ddd] \\
&& P \ar[ddd]	&&& f^\dagger Z_p(\alpha) \simeq Z_{\overline{p}}(\beta) \ar[ld]\ar[ddd]\\
&&&& f^*\coT[n]B \ar[from=llu, crossing over]\ar[uuur, crossing over, bend left=5]\\
&&& B\underset{\coT[n]B}{\times}Z_p(\alpha) \simeq Z(\alpha)\ar[ld]\ar[rrd]\\
&& B \ar[rrd]\ar[lld] &&&	Z_p(\alpha) \ar[ld]\ar[rrd]\\
\pt &&&& \coT[n]B \ar[from=uuu,crossing over]&&& \pt
\end{tikzcd}
\end{equation}
Using that $Z(\beta) \simeq Z_{\overline{p}}(\beta)\underset{\coT[n]P}{\times}P$, 
we get a lagrangian morphism
\[
Z_ {\overline{p}}(\beta)\underset{\coT[n]B}{\times}B = L_{123} \longrightarrow X_{123}=Z(\alpha)\times \overline{Z(\beta)}\,,
\] 
that is a lagrangian correspondence between $Z(\alpha)$ and $Z(\beta)$.
\end{Rem}

\begin{Exa}[Lagrange multipliers]
Assume $B=\mathbb{A}^n$ and $P=\pt$. 
Hence the map $f:\pt \to B$ is equivalent to a point $a= (a_1,\dots,a_n)$ in the affine $n$-space.
If we further assume that $X$ is a smooth algebraic variety and the point $f$ is a regular value of $p:X\to \mathbb{A}^n$, then $M$ is the smooth subvariety defined by $p(x)=f$. 
In this situation, \Cref{usefulcor} allows to describe the derived zero locus 
$Z(\alpha_{|M})$ of the restriction to $M$ of a closed $1$-form $\alpha$ on $X$ as the derived lagrangian intersection 
\[
Z_p(\alpha)\underset{\coT \mathbb{A}^n}{\times}\coT_a\mathbb{A}^n\,,
\]
where $\coT_a\mathbb{A}^n$ is the fiber of $\coT\mathbb{A}^n$ at $a$.
Vectors $\lambda=(\lambda_1,\dots,\lambda_n)\in \coT_a\mathbb{A}^n \simeq \mathbb{A}^n$ are called \textit{Lagrange multipliers}, 
and $Z_p(\alpha)$ is the derived scheme of pairs $(x,\lambda)$ in $X\times \coT_a\mathbb{A}^n$ such that $\alpha_x=p^*(\lambda)_x$. 
\end{Exa}
\begin{Rem}
In the above example, we can replace $B=\mathbb{A}^n$ with an arbitrary Artin stack.
For $B=\B G$, we get $X=M/G$ and the space of Lagrange multipliers becomes $\g^*[-1]$. 
This suggests there is some shifted moment map in play, and it is what the next subsection will show. 
\end{Rem}

\subsection{Derived symplectic reduction of critical loci}

This section proves \Cref{thm:A,thm:B}, but stated under a more general form.
%
We first consider the situation of \Cref{ssec-pullbacks}, specialized to the squares
\[
\begin{tikzcd}
	M \ar[r,"\overline{f}"] \ar[d,"\overline{p}"'] & X \ar[d,"p"] \\
	\pt \ar[r,"f"']				& \B G
\end{tikzcd}
\qquad\qquad
\begin{tikzcd}
	\Lag {\coT[n]M} \ar[from=r,"\overline{f}^\dagger"'] \ar[d,"\overline{p}_\dagger"'] & \Lag{\coT[n]X} \ar[d,"p_\dagger"] \\
	\Lag {\pt_{(n)}} \ar[from=r,"f^\dagger"']				& \Lag {\coT[n]\B G}\,.
\end{tikzcd}
\]
The stack $M$ carries a $G$-action and $X\simeq M/G$. 
Let $\alpha$ be a closed 1-form of degree $n$ on $X$, and $\beta:= \overline f^*(\alpha)$, its pullback on $M$.
Recall from \cref{sec:relative-zeroes}, the lagrangian morphism of relative zeroes $Z_p(\alpha) \to \coT[n]\B G$. 
We get the following general form for \Cref{thm:A}: 

\begin{Thm}\label{thm-main}
There are equivalences of $(n-1)$-shifted symplectic derived stacks 
\[
Z_p(\alpha)\underset{\coT[n]\B G}{\times}\g^*[n-1]\simeq Z(\beta)
\]
and 
\[
Z_p(\alpha)\underset{\coT[n]\B G}{\times}\B G\simeq Z(\alpha)\,.
\]
In other words, according to \Cref{def:momentmap}, the $(n-1)$-shifted symplectic derived stack $Z(\beta)$ is equipped with a $G$-action admitting a shifted moment map 
\[
Z(\beta)\longrightarrow \g^*[n-1]\,,
\]
and its derived symplectic reduction is equivalent to $Z(\alpha)$. 
\end{Thm}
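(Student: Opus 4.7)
The plan is to derive both equivalences directly from the machinery already developed in \Cref{sec:relative-zeroes} and \Cref{ssec-pullbacks}, and then to read off the moment map and reduction structure from the matching formal shape of \Cref{def:momentmap}.

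First I treat the identity
\[
Z_p(\alpha)\underset{\coT[n]\B G}{\times}\g^*[n-1]\simeq Z(\beta)\,.
\]
Since $f:\pt\to\B G$ sends the point to the basepoint, the pulled-back cotangent $f^*\coT[n]\B G$ is precisely $\g^*[n-1]$, with its canonical map to $\coT[n]\B G=\g^*[n-1]/G$ being the quotient map. By construction $f^\dagger L = L\times_{\coT[n]\B G}f^*\coT[n]\B G$, so \Cref{usefulcor} gives $f^\dagger Z_p(\alpha)\simeq Z_{\overline p}(\beta)$, and the latter, being $Z$ relative to the terminal morphism $\overline p:M\to\pt$, is nothing but $Z(\beta)$.

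Next, the identity
\[
Z_p(\alpha)\underset{\coT[n]\B G}{\times}\B G\simeq Z(\alpha)
\]
is an instance of \Cref{lem:compo-zeroes} applied to the sequence $X\overset{p}{\to}\B G\overset{q}{\to}\pt$: the zero section $\B G\to\coT[n]\B G$ coincides with $q^*\coT[n]\pt$, so $q_\dagger Z_p(\alpha)=Z_p(\alpha)\times_{\coT[n]\B G}\B G$ and $q_\dagger Z_p(\alpha)\simeq Z_{q\circ p}(\alpha)=Z(\alpha)$. This is, of course, precisely the observation already recorded in \Cref{usefulexa}.

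Finally, the two equivalences together fit the pattern of \Cref{def:momentmap} for an $(n-1)$-shifted moment map on $Z(\beta)$. The lagrangian morphism $Z_p(\alpha)\to\coT[n]\B G=\g^*[n-1]/G$, combined with the first equivalence and the fact that $\g^*[n-1]\to\g^*[n-1]/G$ is a $G$-torsor, endows $Z(\beta)$ with a $G$-action satisfying $Z(\beta)/G\simeq Z_p(\alpha)$ together with a $G$-equivariant map $Z(\beta)\to\g^*[n-1]$; the lagrangian structure on the reduced moment map is the one already carried by $Z_p(\alpha)$, and the required equivalence $(Z(\beta)/G)\times_{\g^*[n-1]/G}\g^*[n-1]\simeq Z(\beta)$ is precisely the first equivalence rewritten. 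The derived symplectic reduction is then
\[
Z(\beta)\red=Z_p(\alpha)\underset{\g^*[n-1]/G}{\times}\B G\simeq Z(\alpha)
\]
by the second equivalence. The only step requiring care is to verify that the cartesian square really yields the $G$-equivariant structure on $Z(\beta)$ rather than merely an abstract equivalence of stacks; but this is built into the magic cube of \Cref{magic-remark}, whose vertical maps are exactly the $G$-quotients, so no additional work is needed beyond unpacking definitions and invoking \Cref{usefulcor} and \Cref{lem:compo-zeroes}.
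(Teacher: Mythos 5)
Your proof is correct and follows essentially the same route as the paper's: the first equivalence is \Cref{usefulcor} applied to the square with $P=\pt$, $B=\B G$ (identifying the fiber product with $\g^*[n-1]\to\coT[n]\B G$ as $f^\dagger$), and the second is \Cref{lem:compo-zeroes} applied to $X\overset{p}{\to}\B G\to\pt$, exactly as in \Cref{usefulexa}. Your closing paragraph unpacking how the two equivalences assemble into the moment-map structure of \Cref{def:momentmap} (the $G$-action on $Z(\beta)$ with $Z(\beta)/G\simeq Z_p(\alpha)$ via descent along $\g^*[n-1]\to\g^*[n-1]/G$) is in fact slightly more explicit than what the paper records, but it is the same argument.
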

\begin{proof}
We have $Z_p(\alpha) = p_\dagger \Graph(\alpha)$ and $Z(\beta)= \overline p_\dagger \Graph(\beta)$.
The fiber product with the map $\coT[n](f) = \g^*[n-1] \to \coT[n]\B G$ is the functor $f^\dagger$.
Then, the first equivalence is an application of \Cref{usefulcor}.
The second equivalence has already appeared in \Cref{usefulexa}. 
It is a consequence of \Cref{lem:compo-zeroes}: the fiber product with the zero section $\B G \to \coT[n]\B G$ is the functor $g_\dagger$ for $g:\B G \to \pt$.
\end{proof}

\begin{Rem}
\Cref{thm:A} is the particular case where $n=0$, $\alpha = d_{dR}(S\red)$ and $\beta = d_{dR}S$ for some a $G$-invariant function $S:X\to \mathbb A^1$. 
In this case, $Z_p(\alpha) = \Crit_p(S\red)$, $Z(\alpha) = \Crit (S\red)$ and $Z(\beta) = \Crit(S)$.
\end{Rem}

\begin{Rem}
According to \Cref{magic-remark}, the lagrangian correspondence between $Z(\alpha)$ and $Z(\beta)$ is indeed given by 
\[
Z(\beta)\underset{\coT[n]\B G}{\times}\B G\simeq Z(\beta)\underset{\g^*[n-1]}{\times}\pt\,,
\]
that is to say the derived zero fiber of the shifted moment map. Indeed, whenever $P=\pt$ and $B=\B G$, \Cref{diag:magic} becomes 
\[
\begin{tikzcd}[sep=small]
&&&&& \pt\\
&&& \B G \underset{\coT[n]\B G}{\times}Z(\beta) \ar[ld]\ar[rrd]\ar[ddd] \\
&& \pt \ar[ddd]		&&&  Z(\beta) \ar[ld]\ar[ddd]\\
&&&& \g^*[n-1] \ar[from=llu, crossing over]\ar[uuur, crossing over]\\
&&& {\B G\underset{\coT[n]\B G}{\times}Z_p(\alpha) \simeq Z(\alpha)} \ar[ld]\ar[rrd]\\
&& \B G \ar[rrd]\ar[lld] &&&	Z_p(\alpha) \ar[ld]\ar[rrd]\\
\pt &&&& \coT[n]\B G \ar[from=uuu,crossing over]&&& \pt\,.
\end{tikzcd}
\]
\end{Rem}

\medskip

We now consider the situation of \Cref{ssec-pullbacks}, for $M\to P$ an equivariant morphism of $G$-stacks, that is for a square
\[
\begin{tikzcd}
	M \ar[r,"\overline{f}"] \ar[d,"\overline{p}"'] & M/G \ar[d,"p"] \\
	P \ar[r,"f"']				& P/G \,.
\end{tikzcd}
\]
We get the following general form for \Cref{thm:B}:
\begin{Thm}\label{thm-main2}
Let $\alpha$ be a closed $1$-form of degree $n$ on $ M/G$, and let $\beta:=\overline{f}^*\alpha$. Then the relative derived criticial locus $Z_p(\alpha)$ 
is a derived symplectic reduction of $Z_{\overline{p}}(\beta)$ along the moment map $\mu:\coT[n]P\to\g^*[n]$. 
\end{Thm}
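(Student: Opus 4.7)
The theorem follows by combining Corollary \ref{usefulcor} (the Beck–Chevalley equivalence for the push–pull operations on lagrangian correspondences) with the description of the canonical cotangent moment map recalled in Example \ref{example-reduction} and Remark \ref{remark-reduction}. By Definition \ref{def:lag-reduction} and Remark \ref{rem:lag-reduction}, what needs to be produced is a composition of lagrangian correspondences
\[
\pt_{(n)} \;\longleftarrow\; Z_p(\alpha) \;\longrightarrow\; \coT[n](P/G) \;\longleftarrow\; Z(\mu) \;\longrightarrow\; \coT[n]P,
\]
whose total composite recovers the lagrangian morphism $Z_{\overline{p}}(\beta) \to \coT[n]P$.

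\textbf{Step 1: identifying $Z(\mu)$.} I would apply Example \ref{example-reduction} to the $G$-stack $P$: this endows $\coT[n]P$ with the canonical shifted moment map $\mu:\coT[n]P\to\g^*[n]$, whose derived symplectic reduction is $\coT[n](P/G)$. Remark \ref{remark-reduction} then identifies the associated lagrangian correspondence
\[
\coT[n]P \;\longleftarrow\; Z(\mu) \;\longrightarrow\; \coT[n](P/G)
\]
with the cotangent correspondence attached to the quotient morphism $f:P\to P/G$, namely $\coT[n]P \leftarrow f^*\coT[n](P/G) \rightarrow \coT[n](P/G)$.

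\textbf{Step 2: applying Beck–Chevalley.} Next, I would apply Corollary \ref{usefulcor} to the given cartesian square to get, in $\Lag{\coT[n]P}$, the equivalence
\[
Z_{\overline{p}}(\beta) \;\simeq\; f^\dagger Z_p(\alpha) \;=\; Z_p(\alpha)\underset{\coT[n](P/G)}{\times} f^*\coT[n](P/G).
\]
Substituting the identification of Step 1, this becomes
\[
Z_{\overline{p}}(\beta) \;\simeq\; Z_p(\alpha)\underset{\coT[n](P/G)}{\times} Z(\mu),
\]
which is exactly the equivalence of lagrangians required by Definition \ref{def:lag-reduction} to exhibit $Z_p(\alpha)\to\coT[n](P/G)$ as a derived symplectic reduction of $Z_{\overline{p}}(\beta)\to\coT[n]P$ along $\mu$. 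Diagrammatically, this assembles into another instance of the magic cube (\Cref{thm:magic-cube}) analogous to \Cref{diag:magic}, whose top corner is $Z_{\overline{p}}(\beta)$.

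\textbf{Main obstacle.} There is no essentially new obstruction beyond what was already present in the proof of \Cref{thm-main}: this statement is the relative, $P$-parameterized version of the same combination of Beck–Chevalley and cotangent-moment-map facts. The only point requiring some attention is to verify that the $G$-actions produced from the abstract composition of lagrangian correspondences agree with the naive $G$-actions on $Z_{\overline{p}}(\beta)$ and $Z_p(\alpha)$ coming from the equivariance of the input square; this compatibility is forced by cartesianness of the square and is transparent on the magic-cube presentation.
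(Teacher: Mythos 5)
Your proposal is correct and follows essentially the same route as the paper, whose proof simply cites \Cref{usefulcor}, \Cref{example-reduction}, and \Cref{remark-reduction} — exactly the three ingredients you combine, with your Step 1 identifying $Z(\mu)\simeq f^*\coT[n](P/G)$ and your Step 2 rewriting the Beck--Chevalley equivalence $Z_{\overline{p}}(\beta)\simeq f^\dagger Z_p(\alpha)$ as the defining condition of \Cref{def:lag-reduction}. Your write-up just makes explicit what the paper leaves as a one-line deduction.
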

\begin{proof}
This is a direct consequence of \Cref{usefulcor}, together with \Cref{example-reduction} and \Cref{remark-reduction}. 
\end{proof}

\begin{Rem}
\Cref{thm:B} is the particular case where $n=0$, $\alpha = d_{dR}(S\red)$ and $\beta = d_{dR}S$ for some a $G$-invariant function $S:X\to \mathbb A^1$. 
In this case, $Z_p(\alpha) = \Crit_p(S\red)$, $Z_{\overline p}(\beta) = \Crit (S)$.
\end{Rem}


\section{Symplectic reduction commutes with lagrangian intersections}\label{section3bis}

\subsection{Proof of \Cref{thm:C}}

The goal of this section is to prove the following general form of \Cref{thm:C} (which will correspond to the case $n=0$).
We need some notation first.
Let $\mu:X\to\g^*[n]$ be an $n$-shifted moment map as in \Cref{def:momentmap}.
We consider two lagrangian morphisms $L\to X$ and $L'\to X$ together with a choice of two \textit{derived symplectic reductions} $L\red\to X\red$ and $L'\red\to X\red$ of $L$ and $L'$ along $\mu$ (\Cref{def:lag-reduction}). 
Recall from \Cref{rem:lag-reduction} 
that $L$ and $L'$ inherit actions of $G$,
and that the two morphisms $L\to X\leftarrow L'$ are $G$-equivariant.
In particular the $(n-1)$-shifted symplectic derived stack $L\underset{X}{\times}L'$ admits an action of $G$.
The following result will prove in particular that this action is hamiltonian.

\begin{Thm}\label{thm-main3}
There exists an $(n-1)$-shifted moment $L\underset{X}{\times} L'\to\g^*[n-1]$  for the $(n-1)$-shifted symplectic derived $G$-stack $L\underset{X}{\times} L'$, and an equivalence 
\[
\big(L\underset{X}{\times} L'\big)\red\simeq L\red\underset{X\red}{\times} L'\red
\]
of $(n-1)$-shifted symplectic derived stacks. 
\end{Thm}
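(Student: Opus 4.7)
The plan is to deduce \Cref{thm-main3} as a direct instance of the magic cube (\Cref{thm:magic-cube}), mirroring how \Cref{remark-Zmu} realizes the derived symplectic reduction itself as a magic cube, and how \Cref{magic-remark} realizes \Cref{thm-main}. Concretely, I would apply \Cref{thm:magic-cube} to the triple of lagrangian correspondences sharing the common domain $X_0=X$:
\[
L_{01}=Z(\mu)\,:\,X\dashrightarrow X\red\,,\qquad L_{02}=L\,:\,X\dashrightarrow \pt_{(n)}\,,\qquad L_{03}=L'\,:\,X\dashrightarrow \pt_{(n)}\,,
\]
where $Z(\mu)$ is the reduction correspondence from \Cref{remark-Zmu}, and $L, L'$ are viewed as correspondences \emph{via} their lagrangian structures, with codomains $X_1=X\red$ and $X_2=X_3=\pt_{(n)}$. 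The magic cube then produces an $(n-1)$-shifted symplectic stack $X_{123}$ together with a lagrangian morphism $L_{123}\to X_{123}$.

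The next step is to identify the constituents. Tautologically $L_{23}=L\times_X L'$ is the desired $(n-1)$-symplectic intersection. Using the reduction equivalences from \Cref{rem:lag-reduction}, namely $L\simeq L\red\times_{X\red}Z(\mu)$ and similarly for $L'$, together with the key computation
\[
Z(\mu)\underset{X}{\times} Z(\mu)\;\simeq\;Z(\mu)\underset{\g^*[n]}{\times}\pt\;\simeq\;Z(\mu)\times\g^*[n-1]
\]
(which holds because $Z(\mu)=X\times_{\g^*[n]}\pt$ is the null fiber of $\mu$), one computes
\[
L\underset{X}{\times} L'\;\simeq\;L\red\underset{X\red}{\times}Z(\mu)\underset{X\red}{\times}L'\red\,\times\,\g^*[n-1]\,,
\]
with the projection onto the last factor being the candidate $(n-1)$-shifted moment map $\mu_{(-1)}$. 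Under this identification, $\mu_{(-1)}$ agrees with the canonical ``loop'' map $L\times_X L'\to\Omega\g^*[n]=\g^*[n-1]$ produced by the homotopy between the two compositions $L,L'\to X\xrightarrow{\mu}\g^*[n]$ that becomes trivial on $L\times_X L'$. The zero-fiber of $\mu_{(-1)}$ is $L\red\times_{X\red}Z(\mu)\times_{X\red}L'\red$, whose $G$-quotient, using $Z(\mu)/G=X\red$, is precisely $L\red\times_{X\red}L'\red$.

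To upgrade the above to a genuine $(n-1)$-shifted moment map structure with derived symplectic reduction $L\red\times_{X\red}L'\red$, I invoke \Cref{example-reduction-as-lagrangian-reduction}: such a moment-map datum is the same as a derived symplectic reduction of the lagrangian $L\times_X L'\to\pt_{(n)}$ with reduced lagrangian $L\red\times_{X\red}L'\red\to\pt_{(n)}$. The lagrangian morphism $L_{123}\to X_{123}$ produced by the magic cube provides exactly this data, once $X_{123}$ is unfolded---via the cyclicity of the trace in $\mathbf{Lag}_n^{(2)}$ and the self-duality of the correspondence $Z(\mu)$---as a twist of the pairing $L\red\times_{X\red}L'\red\times \overline{L\times_X L'}$. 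The main obstacle I anticipate is precisely this identification of $X_{123}$, together with the bookkeeping of the $G$-equivariance accompanying the self-intersection $Z(\mu)\times_X Z(\mu)$: one has to check that the lagrangian structure resulting from the cube matches the one expected from the reduction data, and to verify the compatibility of the two projections $Z(\mu)\times_X Z(\mu)\to X\red$ under the trivialization by $\g^*[n-1]$.
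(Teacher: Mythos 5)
Your candidate for the shifted moment map is the right one (the ``loop'' map $\nu\colon L\times_X L'\to\pt\times_{\g^*[n]}\pt=\g^*[n-1]$ induced by $\mu$ and the factorizations through $Z(\mu)$ is exactly what the paper uses), but the route through \Cref{thm:magic-cube} does not go through, and two of your intermediate identifications are false. First, the formula $L\times_X L'\simeq\big(L\red\times_{X\red}Z(\mu)\times_{X\red}L'\red\big)\times\g^*[n-1]$ fails: while $Z(\mu)\times_X Z(\mu)\simeq Z(\mu)\times\g^*[n-1]$ is correct as stacks, this splitting is \emph{not} compatible with the two projections $Z(\mu)\times_XZ(\mu)\rightrightarrows Z(\mu)\to X\red$, so you cannot pull the $\g^*[n-1]$ factor out of the iterated fiber product. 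Test it on $X=\pt_{(n)}$, $\mu=0$, $L=L'=\pt$, so that $Z(\mu)=\g^*[n-1]$, $X\red=\g^*[n-1]/G$ and $L\red=L'\red=\B G$: the left-hand side is $\pt$, while your right-hand side is $\g^*[n-2]\times\g^*[n-1]$. Second, and more seriously, the identification of $X_{123}$ that you flag as the main obstacle is not achievable. With your inputs one computes $X_{123}\simeq\big((Z(\mu)\times_XL)\times_{X\red}(L'\times_XZ(\mu))\big)\times\overline{L\times_XL'}$, and in the same test case this is $\g^*[n-1]\times_{\g^*[n-1]/G}\g^*[n-1]\simeq\coT[n-1]G$, whereas $L\red\times_{X\red}L'\red\simeq\g^*[n-2]/G$; their tangent complexes are $\g\oplus\g^*[n-1]$ versus $\g[1]\oplus\g^*[n-2]$, so no ``twist'' reconciles them. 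Your cube outputs a perfectly good lagrangian, but not the one the theorem asserts.

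The structural reason is that the magic cube, used as in \Cref{remark-Zmu}, only \emph{packages} a symplectic reduction once the lagrangian structure on the quotiented moment map is already available as one of the three input correspondences out of $\g^*[n-1]/G$; here that lagrangian structure on $\nu/G\colon(L\times_XL')/G\to\g^*[n-1]/G$ is precisely the datum to be constructed, so the argument is either circular or, with the inputs $(Z(\mu),L,L')$ out of $X$, computes something else. The paper does not use \Cref{thm:magic-cube} here at all: it constructs $\nu$ as the induced map on horizontal limits of the diagram $L\to X\leftarrow L'$ over $\pt\xrightarrow{0}\g^*[n]\xleftarrow{0}\pt$, and then obtains the lagrangian structure on $\nu/G$, the equivalence $\big(L\times_XL'\big)/G\times_{\g^*[n-1]/G}\g^*[n-1]\simeq L\times_XL'$, and finally $\big(L\times_XL'\big)\red\simeq L\red\times_{X\red}L'\red$ by interchanging the order of limits (rows first versus columns first) in two $3\times3$ diagrams of derived stacks mapping naturally to $\Apcl 2 {n+1}$. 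Your correct observations --- that $Z(\nu)\simeq L\times_{Z(\mu)}L'$ with $G$-quotient $L\red\times_{X\red}L'\red$ --- drop out of that computation, but they do not by themselves supply the lagrangian and symplectic structures that constitute the actual content of the statement.
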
 

\begin{proof}
We first construct the moment map.
Recall from \Cref{def:lag-reduction} and \Cref{rem:lag-reduction} that the maps $L\to X$ and  $L'\to X$ factor through $Z(\mu)$.
Hence, we get a commutative diagram:
\begin{equation}
\label[diagram]{diag:nu-map}
\stepcounter{equation}
\tag*{(\theequation)}
\begin{tikzcd}
L \ar[r]\ar[d] &  X \ar[d,"\mu"] & L' \ar[l]\ar[d] \\
\pt \ar[r,"0"] & {\g^*[n]} & \pt \,. \ar[l,"0"']
\end{tikzcd}
\end{equation}
Computing the limit of the two rows, we get a map 
\[
\nu : L \underset{X}{\times} L' \longrightarrow \g^*[n-1]\,.
\]
This is our candidate for the $(n-1)$-shifted moment map.
According to \Cref{def:momentmap} we need to provide a lagrangian structure on $\big(L \underset{X}{\times} L'\big)/G\to \g^*[n-1]/G$, together with an equivalence
\[
\big(L \underset{X}{\times} L'\big)/G\underset{\g^*[n-1]/G}{\times}\g^*[n-1]
\quad \simeq \quad
L \underset{X}{\times} L'
\]
of $(n-1)$-shifted symplectic stacks.

Notice that \Cref{diag:nu-map} is entirely made of $G$-equivariant maps, so the two limits inherits a $G$-action and the map $\nu$ is $G$-equivariant.
Recall also from \Cref{rem:lag-reduction} that $L/G = L\red$ and that $L'/G = L'\red$.
Hence, \Cref{diag:nu-map} induces a diagram:
\begin{equation}
\label[diagram]{diag:nu-map:2}
\stepcounter{equation}
\tag*{(\theequation)}
\begin{tikzcd}
L/G \ar[r]\ar[d] &  X/G \ar[d,"\mu"] & L'/G \ar[l]\ar[d] \\
\B G \ar[r,"0"] & {\g^*[n]/G} & \B G \,. \ar[l,"0"']
\end{tikzcd}
\end{equation}

Let us denote by $\boxplus$ the following finite category: 
\[
\begin{tikzcd}
nw \ar[r]\ar[d] & n \ar[d] & ne \ar[l]\ar[d]\\
w \ar[r] & o & e \ar[l]  \\
sw \ar[r]\ar[u]& s\ar[u]  & se \,. \ar[u]\ar[l]
\end{tikzcd}
\]
If $D:\boxplus\to \mathcal C$ is a $\boxplus$-shaped diagram in an $(\infty,1)$-category $\mathcal C$ with finite limits, the limit of $D$ can be computed in two different ways:
\begin{itemize}
\item First computing three vertical pullbacks, and then the remaining horizontal one; 
\item First computing three horizontal pullbacks, and then the remaining vertical one.
\end{itemize}

We consider the following morphism $u$ in $\mathbf{dSt}^{\boxplus}$:
\[
\begin{tikzcd}
L/G \ar[r]\ar[d] 
&  X/G \ar[d,"\mu/G"'] 
& L'/G \ar[l]\ar[d] 
&&
& \pt \ar[r, equal]\ar[d, equal] 
& \pt \ar[d,"0"'] 
& \pt \ar[l, equal]\ar[d, equal]
\\
\B G \ar[r] 
& \g^*[n]/G 
& \B G \ar[l]  
&\ar[r,"u"] &{}
& \pt \ar[r,"0"] 
& \Apcl 2 {n+1}
& \pt \ar[l,"0"']
\\
\pt \ar[r,"0"]\ar[u]
& \g^*[n] \ar[u]  
& \pt \ar[u]\ar[l,"0"'] 
&&
& \pt \ar[r, equal]\ar[u, equal]
& \pt \ar[u,"0"]  
& \pt \,, \ar[u, equal]\ar[l, equal]
\end{tikzcd}
\]
where the top of left diagram is \Cref{diag:nu-map:2}.
The natural transformation $u$ is essentially induced by the symplectic structure $\g^*[n]/G \to \Apcl 2 {n+1}$, and the naturality comes form the lagrangian structures of the four maps into $\g^*[n]/G$ of the left diagram.

The vertical limit gives a diagram
\[
\begin{tikzcd}
L \ar[r]\ar[d] 
&  X \ar[d] 
& L' \ar[l]\ar[d] 
\\
\pt \ar[r,"0"]
& \Apcl 2 n 
& \pt \,,\ar[l,"0"']
\end{tikzcd}
\]
where the vertical maps are induced by the natural transformation $u$.
Because the lagrangian map $X/G \to \g^*[n]/G$ is part of a moment map structure for $X$, the middle vertical map gives back the $n$-shifted symplectic structure of $X$.
Taking the horizontal limit the map 
$L \underset{X}{\times} L' \longrightarrow \Apcl 2 {n-1}$
is the $(n-1)$-shifted symplectic structure of $L \underset{X}{\times} L'$ as a lagrangian intersection.

Computing first the horizontal limit of $u$ we get a diagram
\[
\begin{tikzcd}
L/G \underset{X/G}{\times} L'/G \ar[r] \ar[d]
& \pt \ar[d,"0"] \\
\g^*[n-1]/G \ar[r]
& \Apcl 2 n\\
\g^*[n-1] \ar[r] \ar[u]
& \pt\,, \ar[u,"0"']
\end{tikzcd}
\]
where the horizontal maps are induced by $u$.
Using that $L/G \underset{X/G}{\times} L/G \simeq \big(L\underset{X}{\times}L\big)/G$, the top square gives a lagrangian structure on the map $\nu/G:\big(L\underset{X}{\times}L\big)/G \to \g^*[n-1]/G$.
The vertical limit of the first column is then a lagrangian intersection
whose $(n-1)$-shifted symplectic structure is given by the map resulting from the vertical limit:
\[
\big(L \underset{X}{\times} L'\big)/G\underset{\g^*[n-1]/G}{\times}\g^*[n-1] \longrightarrow \Apcl 2 {n-1}.
\]
Using that the two ways to compute the limit of $u$ coincide, we get the expected equivalence of $(n-1)$-shifted symplectic stacks:
\[
\big(L \underset{X}{\times} L'\big)/G\underset{\g^*[n-1]/G}{\times}\g^*[n-1]
\quad \simeq \quad
L \underset{X}{\times} L'\,.
\]
This finishes to prove that $\nu:L\underset{X}{\times}L \to \g^*[n-1]$ is an $(n-1)$-shifted moment map.

\medskip
We now prove the equivalence $\big(L\underset{X}{\times} L'\big)\red\simeq L\red\underset{X\red}{\times} L'\red$.
We consider the following morphism $v$ in $\mathbf{dSt}^{\boxplus}$:
\[
\begin{tikzcd}
L/G \ar[r]\ar[d]
&  X/G \ar[d]
& L'/G \ar[l]\ar[d] 
&&
& \pt \ar[r, equal]\ar[d, equal] 
& \pt \ar[d,"0"'] 
& \pt \ar[l, equal]\ar[d, equal]
\\
\B G \ar[r] 
& {\g^*[n]/G}
& \B G \ar[l]  
&\ar[r] &{}
& \pt \ar[r,"0"] 
& {\Apcl 2{n+1}} 
& \pt \ar[l,"0"']
\\
\B G \ar[r, equal]\ar[u, equal]
& \B G \ar[u]  
& \B G \ar[u, equal]\ar[l, equal] 
&&
& \pt \ar[r, equal]\ar[u, equal]
& \pt \ar[u,"0"]  
& \pt \,,\ar[u, equal]\ar[l, equal]
\end{tikzcd}
\]
where the natural transformation $v$ is essentially the symplectic structure $\g^*[n]/G \to \Apcl 2 {n+1}$, 
and where the naturality comes form the lagrangian structures of the four maps into $\g^*[n]/G$ of the left diagram.

Recall that $L\red = L/G$ and $L'\red = L'/G$.
Computing the limit first vertically, we get
\[
\begin{tikzcd}
L\red \ar[r]\ar[d] 
&  X\red \ar[d] 
& L'\red \ar[l]\ar[d] 
\\
\pt \ar[r,"0"]
& \Apcl 2 n 
& \pt\,, \ar[l,"0"']
\end{tikzcd}
\]
where the vertical maps are induced by $v$.
The middle vertical map is the $n$-shifted symplectic structure of $X\red$ and the two squares are the lagrangian structures of $L\red\to X\red$ and $L'\red\to X\red$.
The horizontal limit then gives the $(n-1)$-shifted symplectic structure of 
\[
L\red\underset{X\red}{\times} L'\red
\]
coming from the lagrangian intersection.

Computing now the limit first horizontally, we get
\[
\begin{tikzcd}
L/G \underset{X/G}{\times} L/G \ar[r] \ar[d,"\nu/G"']
& \pt \ar[d,"0"] \\
\g^*[n-1]/G \ar[r]
& \Apcl 2 n\\
\B G \ar[r] \ar[u]
& \pt \,,\ar[u,"0"']
\end{tikzcd}
\]
where the top square is the lagrangian structure of the map $\nu/G$.
Using again $L/G \underset{X/G}{\times} L/G \simeq \big(L \underset{X}{\times} L\big)/G$, the vertical limit is the $(n-1)$-shifted symplectic structure
\[
\big(L \underset{X}{\times} L\big)\red \longrightarrow \Apcl 2 {n-1}.
\]
Using that the two ways to compute the limit of $v$ coincide, we get the expected equivalence of $(n-1)$-shifted symplectic stacks:
\[
\big(L\underset{X}{\times} L'\big)\red\simeq L\red\underset{X\red}{\times} L'\red\,.
\]
\end{proof}

\subsection{\Cref{thm:A} as a special case of \Cref{thm:C}}

Let $M$ be a derived $G$-stack, and assume we are given a closed $1$-form $\alpha$ of degree $n$ on the quotient $M/G$. 
We denote by $\beta$ the pullback of $\alpha$ along the quotient map $M\to M/G$.
and consider $L$ to be the lagrangian morphism $M\simeq\Graph(\beta)\to \coT[n]M$.
Recall that there is an $n$-shifted moment map $\mu:\coT[n]M\to \g^*[n]$, that $(\coT[n]M)\red\simeq \coT[n](M/G)$, and that 
$L\red:=\Graph(\alpha)$ defines a derived symplectic reduction of $L=\Graph(\beta)$. 
When $\alpha=0$, we get that the zero section $L'\red$ of $\coT[n](M/G)$ is a derived symplectic reduction of the zero section $L'$ of $\coT[n]M$. 
\Cref{thm-main3} tells us that $L\underset{\coT[n]M}{\times} L'\simeq Z(\beta)$ carries an $(n-1)$-shifted moment map, and that its 
derived symplectic reduction is $L\red\underset{\coT[n](M/G)}{\times} L'\red\simeq Z(\alpha)$.

\section{Examples}\label{section4}

\subsection{Twisted cotangent bundles of global quotient stacks}

Let $X$ be a smooth scheme, let $G$ be an affine group scheme acting on $X$, and let $\beta\in H^1(X,\Omega^{1,cl}_X)$ be a degree $1$ closed $1$-form on $X$ 
(see \Cref{ex-twisted}): $Z(\beta)$ is equivalent to the twisted cotangent bundle $\coT_{\beta}X$. 
Let us further assume that $\beta$ lifts to a degree $1$ closed $1$-form $\alpha$ on the quotient stack $X/G$. \Cref{thm-main} says that the $G$-action 
on $\coT_{\beta}X$ is hamiltonian and that $(\coT_{\beta}X)_{red}\simeq \coT_{\alpha}(X/G):=Z(\alpha)$. 

\begin{Rem}
According to the description of the underlying graded complexes from \Cref{example:smoothscheme,example:globalquotient}, the morphism 
$\DR {X/G} ^\sharp\to \DR X ^\sharp$ consists in projecting onto $C^0(G,-)$ and sending $\g^*$ to $0$. 
Hence a $1$-form $\beta_0\in H^1(X,\Omega^1_X)$ of degree $1$ on $X$ lifts to a $1$-form $\alpha_0$ on $ X/G$ if and only if it is $G$-invariant 
and basic (i.e.~if $\iota_{\vec{x}}\beta_0=0$ for every $x\in\g$). 
Moreover, as the closedness of $\beta_0$ turns out to be a property, we can see that, 
for degree reason, a degre $1$ closed $1$-form $\beta\in H^1(X,\Omega^{1,cl}_X)$ lifts to a degree $1$ closed $1$-form $\alpha$ on $ X/G$ if and only if the 
underlying $1$-form is $G$-invariant and basic. 
\end{Rem}
Lifts are not unique: we can for instance modify any given lift $\alpha$ by adding an infinitesimal character (see \Cref{exa-classifying}). 
Indeed, for every $\lambda:\B G\to \Apcl 1 1$, we have a new lift
\[
\alpha_{\lambda}=\alpha+p^*\lambda\,,
\]
of $\beta$, where $p: X/G\to \B G$. Even when $\beta=0$ we recover something interesting (and known). 
The twisted cotangent stack $\coT_{\chi} (X/G)$, for $\chi=p^*\lambda$, can be identified with the symplectic reduction of $\coT X$ along the moment map 
\[
\coT X\longrightarrow \g^*
\]
sending a covector $(x,\xi)$ to the linear map $\g \to k$ defined by  
\[
v\longmapsto \xi(\vec{v}_x)-\chi(v)\,.
\]
We refer to \cite{Grat} for more details about this example. 

\subsection{Hilbert scheme of $\mathbb{C}^3$}

We fix a positive integer $n$, and we let $M$ be the space of $4$-tuples $(x,y,z,v)\in \mathfrak{gl}_n(\mathbb{C})^{\times 3}\times\mathbb{C}^n$ 
such that $v$ is a cyclic vector for $(x,y,z)$; this means that $\mathbb{C}\langle x,y,z\rangle\cdot v=\mathbb{C}^n$. 
We finally consider the function $S=\Tr {[x,y]z}$, which is invariant under the action of $GL_n(\mathbb{C})$ on $M$ (by conjugation on $x,y,z$, 
and via the regular representation on $v$). This action does not have non-trivial stabilizers, so that $M/GL_n(\mathbb{C})$ is a smooth quasi-projective variety, 
known as the \textit{noncommutative Hilbert scheme} $\mathrm{NCHilb}^n(\mathbb{C}^3)$. 

\medskip

The truncation of the derived critical locus $\Crit(S\red)$ is the sub-scheme of $\mathrm{NCHilb}^n(\mathbb{C}^3)$ cut-out by $[x,y]=[y,z]=[z,x]=0$. 
This is isomorphic to the \textit{(commutative) Hilbert scheme} $\mathrm{Hilb}^n(\mathbb{C}^3)$. The fact that $\mathrm{Hilb}^n(\mathbb{C}^3)$ is 
the truncation of a $(-1)$-shifted symplectic derived scheme implies in particular that it carries a symmetric obstruction theory in the sense of \cite{BF} (see \cite{PTVV}), 
which is at the heart of the theory of Donaldson--Thomas (DT) invariants. 

\medskip

It would be interesting to know if the fact that $\Crit(S\red)$ is a derived symplectic reduction of $\Crit(S)$ (this is \Cref{thm:A}) provides any useful information 
on this symmetric obstruction theory (and on subsequent DT invariants). This is a general question that is not specific to this example, or even to critical loci: 
what are the consequences for the corresponding symmetric obstruction theory (and associated DT invariants) when a $(-1)$-shifted derived stack is obtained as a 
derived symplectic reduction along a $(-1)$-shifted moment map? 

\begin{Rem}
The above example is a special case of stacks of representations of quivers with potential. 
Indeed, many examples of quiver moduli shall fit into our framework: we refer to \cite{BCS} for a more general perspective, as well as for 
examples related to \Cref{thm:B} (see in particular \cite[Section 4.2.2]{BCS}). 
\end{Rem}


\subsection{Classical Chern--Simons theory}

Strictly speaking, the following example does not exactly fit into our framework, as it falls into the realm of derived differential geometry 
(rather than derived algebraic geometry). 
We refer to \cite{ThesePelle} for the foundations of shifted symplectic structures in the differentiable context. 
There are other technicalities occurring, because there are non Artin stacks appearing in the middle of the process. This is not so much of a problem as 
\begin{itemize}
\item all our results still work if we replace shifted symplectic (resp.~lagrangian) structures with shifted pre-symplectic (resp.~isotropic) structures; 
\item at the end, the non-degeneracy \textit{property} can be checked in an ad hoc manner. 
\end{itemize}

Let $M$ be a closed oriented $3$-dimensional manifold, and consider the space $X$ of connections on the trivial $G$-bundle on $M$: $X=\Omega^1(M,\g)$. 
The gauge group $\mathcal{G}=C^\infty(M,G)$, with Lie algebra $C^\infty(M,\g)$, acts on $X$ by $g\cdot A=gAg^{-1}-dgg^{-1}$. 
The cotangent to $X$ is $\Omega^1(M,\g)\oplus \Omega^2(M,\g)$, and the derived critical locus of the Chern--Simons functional 
\[
S(A)=\int_M\Tr {dA\wedge A+\frac23A^{\wedge 3}}
\]
is thus the fiber of the curvature map $\Omega^1(M,\g)\to \Omega^2(M,\g)$. 
The shifted moment map is given by the map 
\[
\Omega^1(M,\g)\oplus \Omega^2(M,\g)[-1]\longrightarrow \Omega^3(M,\g)[-1]
\]
that sends a connection $\nabla$ with a $2$-form $R$ to the covariant derivative $\nabla R$. 

Note that, even though the Chern--Simons functional is not necessarily gauge invariant, its derivative is, so that $d_{dR}S$ induces a closed $1$-form $\alpha_{CS}$ 
on $X/\mathcal{G}$. Hence (a variant of) \Cref{thm-main} ensures that the derived (differentiable) moduli stack $Z(\alpha_{CS})$ of flat connections (on the trivial 
$G$-bundle on $M$) is a derived symplectic reduction of the (infinite dimensional) derived (differentiable) space $\Crit(S)$ of flat connections. 

\begin{Rem}
\label{rem:infinite-dim}
In the above, we have simplified the picture, especially for what concerns duality issues in the infinite-dimensional setting. 
Note nevertheless that this does not change the output of the derived symplectic reduction (we refer to \cite{ThesePelle} for more details about this). 
\end{Rem}

\subsection{Einstein's covariance principle after Souriau}

This example also belongs to derived differential geometry \cite{ThesePelle}.
Let $M$ be a 4-manifold and $\mathrm{Met}(M)$ the space of lorentzian metrics $g$ on $M$.
We consider $\mathrm{Diff}(M)$ the (diffeological) group of diffeomorphisms of $M$, the (differentiable) stack $\B {\mathrm{Diff}(M)}$ classifies $M$-bundles (bundles whose fibers are isomorphic to $M$).
The group $G = \mathrm{Diff}(M)$ acts on $X=\mathrm{Met}(M)$ and the quotient stack $X/G$ classifies $M$-bundles whose fibers are each equipped with a lorentzian metric.
The Einstein--Hilbert action $S(g)=\int_M R\, vol$ (where $R$ is the scalar curvature of the metric $g$ and $vol$ the riemanniann density of $g$) is a map $S:\mathrm{Met}(M)\to \mathbb R$ which is equivariant for the action of $\mathrm{Diff}(M)$ on $\mathrm{Met}(M)$.
The critical locus $\Crit_ p(S)$ classifies Einstein metrics (solutions to Einstein equation in vacuum).
More generally, if a differential form $\ell$ on $\mathrm{Met}(M)$ is given (whose physical interpretation is the distribution of matter on $M$ \cite{Souriau,Sternberg}) the intersection of $d_{dR}S$ and $\ell$ are the metrics satisfying Einstein equation in the presence of matter.
\[
\begin{tikzcd}[sep=small]
&& \{d_{dR}S = \ell\}\ar[rd]\ar[ld]	\\
& \Graph(d_{dR}S)\ar[rd]\ar[ld]&&\Graph(\ell) \ar[rd]\ar[ld]	\\
\pt && \coT \mathrm{Met}(M)&& \pt
\end{tikzcd}
\]

Souriau formulates Einstein's covariance principle as the condition of equivariance for the distribution of matter $\ell$.
If $\ell$ is produced by a distribution of symmetric tensors on a worldline of $M$, this condition implies that the worldline is geodesic, see \cite{Souriau,Sternberg}.
In our setting, this means that the distribution of matter is a 1-form on $\mathrm{Met}(M)/\mathrm{Diff}(M)$ and we are in a situation where we can apply \Cref{thm:C}.

\[
\begin{tikzcd}[sep=small]
&& \{d_{dR}(S\red) = \ell\red\}\ar[rd]\ar[ld]	\\
& \Graph(d_{dR}S)\red\ar[rd]\ar[ld]&&\Graph(\ell)\red \ar[rd]\ar[ld]	\\
\pt && \coT\big(\mathrm{Met}(M)/\mathrm{Diff}(M)\big)&& \pt
\end{tikzcd}
\]

We are going to compute the shifted moment map on $\{dS = \ell\}$ explicitly but only at the infinitesimal level.
Let $\Theta(M)$ be the space of vector fields on $M$ and $Sym^2(\Theta)(M)$ the space of symmetric covariant tensors of order 2 on $M$.
The tangent space at a point $g$ of $\mathrm{Met}(M)$ is the space $Sym^2(\Omega^1)(M)$ of symmetric contravariant tensors of order 2.
The space $Sym^2(\Theta)(M)$ is a dense open subspace in the dual space of $Sym^2(\Omega^1)(M)$ and to avoid dualizability issues, we are going to consider the open subspace of $T^*_c\mathrm{Met}(M) \subset T^*\mathrm{Met}(M)$ whose fiber over a metric $g$ is the space $Sym^2(\Theta)(M)$.
We implicitely restrict $\Graph (d_{dR}S)$ and $\Graph(\ell)$ to this subspace, in what follows.
The subspace $T^*_c\mathrm{Met}(M)$ inherits the canonical 2-form of $T^*\mathrm{Met}(M)$. 
The tangent space at a point $g$ of $T^*_c\mathrm{Met}(M)$ is then the space $Sym^2(\Theta)(M) \oplus Sym^2(\Omega^1)(M)$ on which the canonical 2-form induces the obvious pairing.
We deduce that the tangent complex at a point $g$ of the stack $\{dS = \ell\}$ is 
\[
Sym^2(\Theta)(M)\longrightarrow Sym^2(\Omega^1)(M)\,,
\]
concentrated in degrees 0 and 1, and where the differential is given by the linearization of Einstein's equations at $g$.
The action of $\mathrm{Diff}(M)$ on $\mathrm{Met}(M)$ is given infinitesimally by the map 
$\mathcal L_{(-)}g:T(M) \to Sym^2(\Theta)(M)$ sending a vector field $\xi$ to the Lie derivative $\mathcal L_\xi g$.
The corresponding (infinitesimal) moment map is given by the map $Sym^2(\Omega^1)(M)\to \Omega^1(M)$ dual to $\mathcal L_{(-)}g$.

Then the induced infinitesimal action of $\mathrm{Diff}(M)$ on $\{dS = \ell\}$ is essentially given by the map $\mathcal L_{(-)}g$ in degree 0 and the derivative $\nu'$ of the $(-1)$-shifted moment map $\nu:\{dS = \ell\} \to \Omega^1(X)[-1]$ is essentially given by the map $\mu$ in degree 1.
This is best pictured as the following morphisms of complexes in amplitude $[0,1]$:
\[
\begin{tikzcd}
\Theta(M) \ar[d] && \Theta(M) \ar[d,"\mathcal L_{(-)}g"']\ar[r]& 0 \ar[d] \\
T_g\{dS = \ell\} \ar[d,"\nu'"']   && Sym^2(\Theta)(M)\ar[r]\ar[d] & Sym^2(\Omega^1)(M) \ar[d,"\mu"]\\
\Omega^1(M)[-1] && 0 \ar[r]& \Omega^1(M) \, .
\end{tikzcd}
\]
The tangent complex at $g$ in $\{dS = \ell\}\red$ is then the total complex of this double complex (in amplitude $[-1,2]$)
\[
\Theta(M) \xrightarrow {\mathcal L_{(-)}g} Sym^2(\Theta)(M)\xrightarrow {\textrm{lin. E. eqt}}  Sym^2(\Omega^1)(M) \xrightarrow {\ \mu\ }\Omega^1(M)\,.
\]

\newcommand{\bysame}{\leavevmode\hbox to3em{\hrulefill}\,}

\end{document}